\newtheorem{theo}{Theorem}
\newtheorem{prop}{Proposition}
\def\be{\begin{equation}}
\def\ee{\end{equation}}
\def\bq{\begin{eqnarray}}
\def\eq{\end{eqnarray}}
\def\beq{\begin{eqnarray*}}
\def\eeq{\end{eqnarray*}}
\begin{document}

%





\authorrunninghead{J.M. Ginoux and J. Llibre}
\titlerunninghead{Canards Existence in Memristor's Circuits}

\title{Canards Existence in Memristor's Circuits}



\vspace{24pt}
{\begin{minipage}{24pc}
\footnotesize{\ \ \ \ The aim of this work is to propose an alternative method for determining the condition of existence of ``canard solutions'' for three and four-dimensional singularly perturbed systems with only one \textit{fast} variable in the \textit{folded saddle} case. This method enables to state a unique generic condition for the existence of ``canard solutions'' for such three and four-dimensional singularly perturbed systems which is based on the stability of \textit{folded singularities} of the \textit{normalized slow dynamics} deduced from a well-known property of linear algebra. This unique generic condition is perfectly identical to that provided in previous works. Application of this method to the famous three and four-dimensional memristor canonical Chua's circuits for which the classical piecewise-linear characteristic curve has been replaced by a smooth cubic nonlinear function according to the \textit{least squares method} enables to show the existence of ``canard solutions'' in such Memristor Based Chaotic Circuits.}
\end{minipage}}
\vspace{10pt}

\keywords{Geometric singular perturbation theory, singularly perturbed dynamical systems, canard solutions.}

\begin{article}

\section{Introduction}

As recalled by Fruchard and Sch\"{a}fke \cite[p. 435]{Fruchard2007}: ``In the late 1970s, under the leadership of George Reeb, a group of young researchers, Jean-Louis Callot, Francine and Marc Diener, Albert Troesch, Emile Urlacher and then, Eric Beno\^{i}t and Imme van den Berg, based in Strasbourg some in Oran and others in Tlemcen were given as research program to develop methods for ``non-standard analysis\footnote{For more details see Robinson \cite{Robinson1966} and Nelson \cite{Nelson1977}.}'' for the study of singular perturbation problems. Georges Reeb had proposed to introduce a particular control parameter \textit{a} in the original van der Pol's equation \cite{VdP1926}.

\[
\hfill \varepsilon \ddot{x} + ( 1 - x^2 ) \dot{x} + x = a \hfill
\]

The study of this equation has led this group to discover surprising solutions, which they named ``ducks\footnote{Canards in French.}''. Van der Pol relaxation oscillator is considered as the paradigm of \textit{slow}-\textit{fast} systems, \textit{i.e.} two-dimensional singularly perturbed system with one \textit{slow}  variable and one \textit{fast}. It is well-known that for the control parameter value $a = 1$, a Hopf bifurcation takes place in this system\footnote{See Callot \textit{et al.} \cite{CallotDiener1978}, Beno\^{i}t \textit{et al.} \cite{BenCalDien}, Beno\^{i}t \textit{et al.} \cite{Benoit1981a}, Beno\^{i}t \cite{Benoit1981b} and Ginoux \textit{et al.} \cite{GinouxLLibre2011}.}. So, as expected by this group of researchers, by setting $\varepsilon$ constant, one would make the amplitude of the periodic solution (limit cycle) change very fast for values of $a$ near (just below) 1. But, the results exceeded their expectations: at one critical value $a = 0.9987404512$ a very small change of this parameter's value produced an amplitude drop of about 80 \%.

According to Marc Diener \cite[p. 38]{Diener1984}: ``It was as if the existence of medium size solutions would be a ``canard\footnote{Canard = false report, from the old-French ``vendre un canard moiti\'{e}'' (Sell the half of duck). }''! Canard is now the name of a type of solution of a slow-fast differential system, to which the above ``missing'' medium-size solutions belong, that had previously been ignored.''. Another interpretation of the denomination ``canard'' also given by Diener \cite[p. 45]{Diener1984} in the the same article, is that the periodic solution resembles, for this critical parameter value, to a duck (See Fig. 1.).

\begin{figure}[htbp]
\centerline{\includegraphics[width=8.5cm,height=4.78cm]{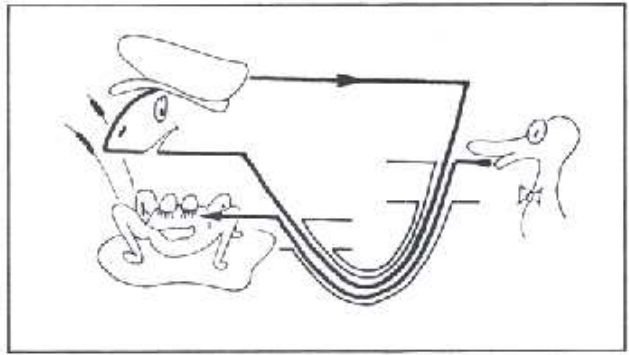}}
\vspace{0.1in}
\caption{``Canard cycle'' of the Van der Pol equation, Diener \cite[p. 45]{Diener1984}.}
\label{Fig1}
\end{figure}

In the beginning of the eighties, Beno\^{i}t and Lobry \cite{BenoitLobry}, Beno\^{i}t \cite{Benoit1983} and then Beno\^{i}t \cite{Benoit1984} in his PhD-thesis studied canard solutions in $\mathbb{R}^3$. In the article entitled ``Syst\`{e}mes lents-rapides dans $\mathbb{R}^3$ et leurs canards,''  Beno\^{i}t \cite[p. 170]{Benoit1983} proved the existence of canards solution for three-dimensional singularly perturbed systems with two \textit{slow} variables and one \textit{fast} variable while using ``Non-Standard Analysis''according to a theorem which stated that canard solutions exist in such systems provided that the \textit{pseudo singular point}\footnote{This concept has been originally introduced by Jos\'{e} Arg\'{e}mi \cite{Argemi}. See Sec. 2.7.} of the \textit{slow dynamics}, \textit{i.e.}, of the \textit{reduced vector field} is of \textit{saddle} type.

Nearly twenty years later, Szmolyan and Wechselberger \cite{SzmolyanWechselberger2001} extended ``Geometric Singular Perturbation Theory\footnote{See Fenichel \cite{Fen1971, Fen1974, Fen1977, Fen1979}, O'Malley \cite{OMalley1974}, Jones \cite{Jones1994} and Kaper \cite{Kaper1999}.}'' to canards problems in $\mathbb{R}^3$ and provided a ``standard version'' of Beno\^{i}t's theorem \cite{Benoit1983}. Very recently, Wechselberger \cite{Wechselberger2012} generalized this theorem for $n$-dimensional singularly perturbed systems with $k$ \textit{slow} variables and $m$ \textit{fast} (Eq. (\ref{eq1})). The method used by Szmolyan and Wechselberger \cite{SzmolyanWechselberger2001} and Wechselberger \cite{Wechselberger2012} require to implement a ``desingularization procedure'' which can be summarized as follows: first, they compute the \textit{normal form} of such singularly perturbed systems (see Eq. (\ref{eq28}) for dimension three and Eq. (\ref{eq48}) for dimension four) which is expressed according to some coefficients ($a$ and $b$ for dimension three and $\tilde{a}$, $\tilde{b}$ and $\tilde{c}_j$ for dimension four) depending on the functions defining the original vector field (\ref{eq1}) and their partial derivatives with respect to the variables. Secondly, they project the ``desingularized vector field'' (originally called ``normalized slow dynamics'' by Eric Beno\^{i}t \cite[p. 166]{Benoit1983}) of such a \textit{normal form} on the tangent bundle of the critical manifold. Finally, they evaluate the Jacobian of the projection of this ``desingularized vector field'' at the \textit{folded singularity} (originally called \textit{pseudo singular points} by Jos\'{e} Arg\'{e}mi \cite[p. 336]{Argemi}. This leads Szmolyan and Wechselberger \cite[p. 427]{SzmolyanWechselberger2001} and Wechselberger \cite[p. 3298]{Wechselberger2012} to a ``classification of \textit{folded singularities} (\textit{pseudo singular points})''. Thus, they show that for three-dimensional singularly perturbed systems such \textit{folded singularities} is of \textit{saddle type} if the following condition is satisfied: $a<0$ while for four-dimensional singularly perturbed systems such \textit{folded singularities} is of \textit{saddle type} if $\tilde{a}<0$. Then, Szmolyan and Wechselberger \cite[p. 439]{SzmolyanWechselberger2001} and Wechselberger \cite[p. 3304]{Wechselberger2012} establish their Theorem 4.1. which state that ``\textit{In the folded saddle and in the folded node case singular canards perturb to maximal canard for sufficiently small $\varepsilon$}''. However, in their works neither Szmolyan and Wechselberger \cite{SzmolyanWechselberger2001} nor Wechselberger \cite{Wechselberger2012} do not provide (to our knowledge) the expression of these constants ($a$ and $\tilde{a}$) which are necessary to state the existence of canard solutions in such systems.

So, the aim of this work is first to provide the expression of these constants and then to show that they can be directly determined starting from the \textit{normalized slow dynamics} and not from the projection of the ``desingularized vector field'' of the \textit{normal form}. This method enables to state a unique ``generic'' condition for the existence of ``canard solutions'' for such three and four-dimensional singularly perturbed systems which is based on the stability of \textit{folded singularities} of the \textit{normalized slow dynamics} deduced from a well-known property of linear algebra. This unique condition which is completely identical to that provided by Beno\^{i}t \cite{Benoit1983} and then by Szmolyan and Wechselberger \cite{SzmolyanWechselberger2001} and finally by Wechselberger \cite{Wechselberger2012} is ``generic'' since it is exactly the same for singularly perturbed systems of dimension three and four with only one \textit{fast} variable. So, it provides a path to many applications.

In the very beginning of the seventies, Leon Chua \cite{Chua1971} considered the three basic building blocks of an electric circuit: the capacitor, the resistor and the inductor as well as the three laws linking the four fundamental circuit variables, namely, the electric \textit{current} $i$, the \textit{voltage} $v$, the \textit{charge} $q$ and the \textit{magnetic flux} $\varphi$. He thus concluded from the logical as well as axiomatic points of view, that it is necessary, for the sake of \textit{completeness}, to postulate the existence of a fourth circuit element to which he gave the name \textit{memristor} since it behaves like a nonlinear resistor with memory. On April 30$^{th}$ 2008, Stan Williams and co-workers \cite{Strukhov2008} announced in the journal \textit{Nature} that the missing circuit element, postulated thirty-seven years before by Leon Chua has been found \cite{GinouxRossetto2013}. Since, the memristor has been subject to many studies and applications \cite{DiVentraPershinChua2009,PershinDiVentra2009}. More particularly, memristor-based circuits have been used by Itoh and Chua \cite{ItohChua2008,ItohChua2013}, Muthuswamy and Kokate \cite{MuthuswamyKokate2009}, Muthuswamy \cite{Muthuswamy2010}, Muthuswamy and Chua \cite{MuthuswamyChua2010} and Fitch \textit{et al.} \cite{Fitch2012,Fitch2013} to construct dynamical systems whose solutions exhibit chaotic and hyperchaotic behavior \cite{Fitch2012}.

In a paper entitled ``Duality of Memristors Circuits'', Itoh and Chua \cite[p. 1330001-15]{ItohChua2013} gave the memristor canonical Chua's circuit equation (69) in the three-dimensional flux-linkage and charge phase space. Differentiating this Eq. (69) with respect to time they obtained memristor-based canonical Chua's circuit equation (73) in the four-dimensional current-voltage phase space\footnote{Let's notice that Eq. (73) corresponds exactly to what Itoh and Chua \cite[p. 3188]{ItohChua2008} have called in their previous paper on ``Memristor Oscillators'' the fourth-order memristor-based canonical Chua's circuit equation (35).}.
In both cases, the $\varphi - q$ characteristic curve of these circuits has been represented by a piecewise-linear function (Eq. (40) in Itoh and Chua \cite[p. 3189]{ItohChua2008} and Eq. (70) in Itoh and Chua \cite[p. 1330001-15]{ItohChua2013}). In their works, Itoh and Chua \cite{ItohChua2008,ItohChua2013} have shown that the dynamical systems modeling such circuits possess at least one eigenvalue with a large negative real part. This specific feature is of great interest since it enables to consider memristor-based canonical Chua's circuits as \textit{slow-fast dynamical systems}. So, there exists in the phase-space a \textit{slow manifold} on which trajectories, solution of the dynamical system modeling the memristor circuit, evolve slowly and toward which nearby orbits contract exponentially in time in the normal directions.

For these memristor-based canonical circuits Itoh and Chua \cite{ItohChua2008,ItohChua2013} have used a classical piecewise-linear function for the $\varphi - q$ characteristic curve. However, Muthuswamy \cite{Muthuswamy2010} and Fitch \textit{et al.} \cite{Fitch2012} have proposed to replace this piecewise linear characteristic curve by a smooth cubic nonlinear function. This enables to exhibit the existence of generic ``canard solutions'' in such Memristor Based Chaotic Circuits.

The outline of this paper is as follows. In Sec. 2, definitions of singularly perturbed system, critical manifold, reduced system, ``constrained system'', canard cycles, folded singularities and pseudo singular points are recalled. The method proposed in this article is presented in Sec. 3 \& 4 for the case of three and four-dimensional singularly perturbed systems with only one \textit{fast} variable. Existence of canard solution
for the third and fourth-order Chua's memristor is established according to this method in Sec. 5 \& 6.

\section{Definitions}

\subsection{Singularly perturbed systems}
\label{Sec2}

According to Tikhonov \cite{Tikhonov1948}, Pontryagin \cite{Pontryagin1957}, Jones \cite{Jones1994} and Kaper \cite{Kaper1999} \textit{singularly perturbed systems} are defined as:

\begin{equation}
\label{eq1}
\begin{aligned}
{\vec {x}}' & = \varepsilon \vec{f} \left( {\vec{x},\vec{y},\varepsilon} \right), \\
{\vec {y}}' & = \vec {g}\left( {\vec{x}, \vec{y},\varepsilon }
\right).
\end{aligned}
\end{equation}

where $\vec {x} \in \mathbb{R}^k$, $\vec {y} \in \mathbb{R}^m$, $\varepsilon \in \mathbb{R}^ + $, and the prime denotes
differentiation with respect to the independent variable $t'$. The functions $\vec {f}$ and $\vec {g}$ are assumed to be $C^\infty$
functions\footnote{In certain applications these functions will be supposed to be $C^r$, $r \geqslant 1$.} of $\vec {x}$, $\vec {y}$
and $\varepsilon$ in $U\times I$, where $U$ is an open subset of $\mathbb{R}^k\times \mathbb{R}^m$ and $I$ is an open interval
containing $\varepsilon = 0$.

\smallskip

In the case when $0 < \varepsilon \ll 1$, \textit{i.e.} $\varepsilon$ is a small positive number, the variable $\vec {x}$ is called \textit{slow} variable, and $\vec {y}$ is called \textit{fast} variable. Using Landau's notation: $O\left( \varepsilon^p \right)$ represents a function $f$ of $u$ and $\varepsilon $ such that $f(u,\varepsilon) / \varepsilon^p$ is bounded for positive $\varepsilon$  going to zero, uniformly for $u$ in the given domain.

\smallskip

In general we consider that $\vec {x}$ evolves at an $O\left( \varepsilon \right)$ rate; while $\vec {y}$ evolves at an $O\left( 1 \right)$ \textit{slow} rate. Reformulating system (\ref{eq1}) in terms of the rescaled variable $t = \varepsilon t'$, we obtain

\begin{equation}
\label{eq2}
\begin{aligned}
\dot {\vec {x}} & = \vec{f} \left( {\vec{x},\vec{y},\varepsilon} \right), \\
\varepsilon \dot {\vec {y}} & = \vec {g}\left( {\vec{x}, \vec{y},\varepsilon }
\right).
\end{aligned}
\end{equation}

The dot represents the derivative with respect to the new independent variable $t$.

\smallskip

The independent variables $t'$ and $t$ are referred to the \textit{fast} and \textit{slow} times, respectively, and (\ref{eq1}) and (\ref{eq2}) are called the \textit{fast} and \textit{slow} systems, respectively. These systems are equivalent whenever $\varepsilon \ne 0$, and they are labeled \textit{singular perturbation problems} when $0 < \varepsilon \ll 1$. The label ``singular'' stems in part from the discontinuous limiting behavior
in system (\ref{eq1}) as $\varepsilon \to 0$.

\smallskip

\subsection{Reduced slow system}

In such case system (\ref{eq2}) leads to a differential-algebraic system (D.A.E.) called \textit{reduced slow system} whose dimension decreases from $k + m = n$ to $m$. Then, the \textit{slow} variable $\vec {x} \in \mathbb{R}^k$ partially evolves in the submanifold $M_0$ called the \textit{critical manifold}\footnote{It represents the approximation of the slow invariant manifold, with an error of $O(\varepsilon)$.}. The \textit{reduced slow system} is

\begin{equation}
\label{eq3}
\begin{aligned}
\dot {\vec {x}} & = \vec{f} \left( {\vec{x},\vec{y},\varepsilon} \right), \\
\vec {0} & = \vec {g}\left( {\vec{x}, \vec{y},\varepsilon }
\right).
\end{aligned}
\end{equation}

\subsection{Slow Invariant Manifold}

The \textit{critical manifold} is defined by

\begin{equation}
\label{eq4} M_0 := \left\{ {\left( {\vec {x},\vec {y}} \right):\vec
{g}\left( {\vec {x},\vec {y},0} \right) = {\vec {0}}} \right\}.
\end{equation}

Such a normally hyperbolic invariant manifold (\ref{eq4}) of the \textit{reduced slow system} (\ref{eq3}) persists as a locally invariant \textit{slow manifold} of the full problem (\ref{eq1}) for $\varepsilon$ sufficiently small. This locally \textit{slow invariant manifold} is $O(\varepsilon)$ close to the \textit{critical manifold}.

When $D_{\vec{x}}\vec{f}$ is invertible, thanks to the Implicit Function Theorem, $M_0 $ is given by the graph of a $C^\infty $ function $\vec {x} = \vec {G}_0 \left( \vec {y} \right)$ for $\vec {y} \in D$, where $D\subseteq \mathbb{R}^k$ is a compact, simply connected domain and the boundary of D is a $(k - 1)$--dimensional $C^\infty$ submanifold\footnote{The set D is overflowing invariant with respect to (\ref{eq2}) when $\varepsilon = 0$. See Kaper \cite{Kaper1999} and Jones \cite{Jones1994}.}.

\smallskip

According to Fenichel \cite{Fen1971,Fen1979} theory if $0 < \varepsilon \ll 1$ is sufficiently small, then there exists a function $\vec {G}\left( {\vec {y},\varepsilon } \right)$ defined on D such that the manifold

\begin{equation}
\label{eq5} M_\varepsilon := \left\{ {\left( {\vec {x},\vec {y}}
\right):\vec {x} = \vec {G}\left( {\vec {y},\varepsilon } \right)} \right\},
\end{equation}

is locally invariant under the flow of system (\ref{eq1}). Moreover, there exist perturbed local stable (or attracting) $M_a$ and unstable (or repelling) $M_r$ branches of the \textit{slow invariant manifold} $M_\varepsilon$. Thus, normal hyperbolicity of $M_\varepsilon$ is lost via a saddle-node bifurcation of the \textit{reduced slow system} (\ref{eq3}). Then, it gives rise to solutions of ``canard'' type.

\subsection{Canards, singular canards and maximal canards}

A \textit{canard} is a solution of a singularly perturbed dynamical system (\ref{eq1}) following the \textit{attracting} branch $M_a$ of the \textit{slow invariant manifold}, passing near a bifurcation point located on the fold of this \textit{slow invariant manifold}, and then following the \textit{repelling} branch $M_r$ of the \textit{slow invariant manifold}.\\

A \textit{singular canard} is a solution of a \textit{reduced slow system} (\ref{eq3}) following the \textit{attracting} branch $M_{a,0}$ of the \textit{critical manifold}, passing near a bifurcation point located on the fold of this \textit{critical manifold}, and then following the \textit{repelling} branch $M_{r,0}$ of the \textit{critical manifold}.\\

A \textit{maximal canard} corresponds to the intersection of the attracting and repelling branches $M_{a,\varepsilon} \cap M_{r,\varepsilon}$ of the slow manifold in the vicinity of a non-hyperbolic point.\\

According to Wechselberger \cite[p. 3302]{Wechselberger2012}:

\begin{quote}
``Such a maximal canard defines a family of canards nearby which are exponentially close to the maximal canard, \textit{i.e.} a family of solutions of (\ref{eq1}) that follow an attracting branch $M_{a,\varepsilon}$ of the slow manifold and then follow, rather surprisingly, a repelling/saddle branch
$M_{r,\varepsilon}$ of the slow manifold for a considerable amount of slow time. The existence of this family of canards is a consequence of the non-uniqueness of $M_{a,\varepsilon}$ and $M_{r,\varepsilon}$. However, in the singular limit $\varepsilon \rightarrow 0$, such a family of canards is represented by a unique singular canard.''
\end{quote}

Canards are a special class of solution of singularly perturbed dynamical systems for which normal hyperbolicity is lost. Canards in singularly perturbed systems with two or more slow variables $(\vec {x} \in \mathbb{R}^k$, $k \geqslant 2)$ and one fast variable $(\vec {y} \in \mathbb{R}^m$, $m = 1)$ are robust, since maximal canards generically persist under small parameter changes\footnote{See Beno\^{i}t \cite{Benoit1983, Benoit2001}, Szmolyan and Wechselberger \cite{SzmolyanWechselberger2001} and Wechselberger \cite{Wechselberger2005,Wechselberger2012}.}.

\subsection{Constrained system}

In order to characterize the ``slow dynamics'',  \textit{i.e.} the slow trajectory of the \textit{reduced slow system} (\ref{eq3})  (obtained by setting $\varepsilon = 0$ in (\ref{eq2})), Floris Takens \cite{Takens1976} introduced the ``constrained system'' defined as follows:

\begin{equation}
\label{eq6}
\begin{aligned}
\dot {\vec {x}} & = \vec{f} \left( {\vec{x},\vec{y},0} \right), \\
D_{\vec{y}} \vec{g} . \dot {\vec {y}} & =  - ( D_{\vec{x}} \vec{g} . \vec {f} ) \left( {\vec{x}, \vec{y}, 0} \right),\\
\vec {0} & = \vec {g}\left( {\vec{x}, \vec{y}, 0 } \right).
\end{aligned}
\end{equation}

\smallskip

Since, according to Fenichel \cite{Fen1971,Fen1979}, the \textit{critical manifold} $\vec{g} \left( {\vec{x},\vec{y},0} \right)$ may be considered as locally invariant under the flow of system (\ref{eq1}), we have:

\[
\hfill \frac{d\vec{g}}{dt} \left( {\vec{x},\vec{y},0} \right) = 0 \quad \Longleftrightarrow \quad D_{\vec{x}} \vec{g} . \dot{\vec{x}} + D_{\vec{y}} \vec{g} . \dot {\vec {y}} = \vec{0}. \hfill
\]

By replacing $\dot{\vec{x}}$ by $\vec{f}\left( {\vec{x},\vec{y},0} \right)$ leads to:

\[
\hfill D_{\vec{x}} \vec{g} . \vec{f}\left( {\vec{x},\vec{y},0} \right) + D_{\vec{y}} \vec{g} . \dot {\vec {y}} = \vec{0}. \hfill
\]

This justifies the introduction of the \textit{constrained system}.\\

Now, let $adj(D_{\vec{y}} \vec{g})$ denote the adjoint of the matrix $D_{\vec{y}} \vec{g}$ which is the transpose of the co-factor matrix $D_{\vec{y}} \vec{g}$, then while multiplying the left hand side of (\ref{eq6}) by the inverse matrix $(D_{\vec{y}} \vec{g})^{-1}$ obtained by the adjoint method we have:

\begin{equation}
\label{eq7}
\begin{aligned}
\dot {\vec {x}} & = \vec{f} \left( {\vec{x},\vec{y},0} \right), \\
det(D_{\vec{y}} \vec{g}) \dot {\vec {y}} & =  - (adj(D_{\vec{y}} \vec{g}) . D_{\vec{x}} \vec{g} . \vec {f} ) \left( {\vec{x}, \vec{y}, 0} \right),\\
\vec {0} & = \vec {g}\left( {\vec{x}, \vec{y}, 0 } \right).
\end{aligned}
\end{equation}

\subsection{Normalized slow dynamics}

Then, by rescaling the time by setting $t = - det(D_{\vec{y}} \vec{g}) \tau$ we obtain the following system which has been called by Eric Beno\^{i}t \cite[p. 166]{Benoit1983} ``normalized slow dynamics'':

\begin{equation}
\label{eq8}
\begin{aligned}
\dot {\vec {x}} & = - det(D_{\vec{y}} \vec{g}) \vec{f} \left( {\vec{x},\vec{y},0} \right), \\
\dot {\vec {y}} & =  (adj(D_{\vec{y}} \vec{g}) . D_{\vec{x}} \vec{g} . \vec {f} ) \left( {\vec{x}, \vec{y}, 0} \right),\\
\vec {0} & = \vec {g}\left( {\vec{x}, \vec{y}, 0 } \right).
\end{aligned}
\end{equation}

where the overdot now denotes the time derivation with respect to $\tau$.

Let's notice that Jos\'{e} Arg\'{e}mi \cite{Argemi} proposed to rescale time by setting $t = - det(D_{\vec{y}} \vec{g}) sgn(det(D_{\vec{y}} \vec{g})) \tau$ in order to keep the same flow direction in (\ref{eq8}) as in (\ref{eq7}).

\subsection{Desingularized vector field}

By application of the Implicit Function Theorem, let suppose that we can explicitly express from Eq. (\ref{eq4}), say without loss of generality, $x_1$ as a function $\phi_1$ of the other variables. This implies that $M_0$ is locally the graph of a function $\phi_1 \mbox{ : } \mathbb{R}^k \to \mathbb{R}^m$ over the base $U = ( \vec{ \chi }, \vec{y} ) $ where $\vec{ \chi } = (x_2, x_3, ..., x_k)$. Thus, we can span the ``normalized slow dynamics'' on the tangent bundle at the \textit{critical manifold} $M_0$ at the \textit{pseudo singular point}. This leads to the so-called \textit{desingularized vector field}:

\begin{equation}
\label{eq9}
\begin{aligned}
\dot {\vec {\chi}} & = - det(D_{\vec{y}} \vec{g}) \vec{f} \left( {\vec{\chi},\vec{y},0} \right), \\
\dot {\vec {y}} & =  (adj(D_{\vec{y}} \vec{g}) . D_{\vec{x}} \vec{g} . \vec {f} ) \left( {\vec{\chi}, \vec{y}, 0} \right).
\end{aligned}
\end{equation}

\subsection{Pseudo singular points and pseudo singular manifolds}

As recalled by Guckenheimer and Haiduc \cite[p. 91]{GuckenHaiduc2005}, \textit{pseudo-singular points} have been introduced by the late Jos\'{e} Arg\'{e}mi \cite{Argemi} for low-dimensional singularly perturbed systems and are defined as singular points of the ``normalized slow dynamics'' (\ref{eq8}). Twenty-three years later, Szmolyan and Wechselberger \cite[p. 428]{SzmolyanWechselberger2001} called such \textit{pseudo singular points}, \textit{folded singularities}. In a recent publication entitled ``A propos de canards'' Wechselberger \cite[p. 3295]{Wechselberger2012} proposed to define such singularities for $n$-dimensional singularly perturbed systems with $k$ \textit{slow} variables and $m$ \textit{fast} as the solutions of the following system:

\begin{equation}
\label{eq10}
\begin{aligned}
& det(D_{\vec{y}} \vec{g}) = 0, \\
& (adj(D_{\vec{y}} \vec{g}) . D_{\vec{x}} \vec{g} . \vec {f} ) \left( {\vec{x}, \vec{y}, 0} \right) = \vec{0},\\
& \vec {g}\left( {\vec{x}, \vec{y}, 0 } \right) = \vec {0}.
\end{aligned}
\end{equation}

Thus, for dimensions higher than three, his concept encompasses that of Arg\'{e}mi. Moreover, Wechselberger \cite[p. 3296]{Wechselberger2012} proved that \textit{folded singularities} form a $(k-2)$-dimensional manifold. Thus, for $k=2$ the \textit{folded singularities} are nothing else than the \textit{pseudo singular points} defined by Arg\'{e}mi \cite{Argemi}. While for $k \geqslant 3$ the \textit{folded singularities} are no more points but a $(k-2)$-dimensional manifold. Moreover, let's notice on the one hand that the original system (\ref{eq1}) includes $n=k+m$ variables and on the other hand, that the system (\ref{eq10}) comprises $p=2m +1$ equations. So, we are faced to a system of $p$ equations with $n$ unknowns. If $p < n$ the system is triangular and will necessarily have an infinite number of solutions that will be able to express in terms of the last unknowns. Since in this work we are only interested in three and four-dimensional singularly perturbed systems with $m=1$ \textit{fast} variable and with $k$ \textit{slow} variables we have $p=3$ and $n=k+1$ and so, $n-p=k-2$. Thus, we will examine the case $k=2$ and $k \geqslant 3$.

\subsubsection{Pseudo singular points -- Case $k = 2$}

If $k=2$ the number of variables of system (\ref{eq1}) is equal to $n=3$ and the number of equations is also equal to $p=3$. So, all the variables (unknowns) of system (\ref{eq10}) can be determined. The solutions of such system are called \textit{pseudo singular points}. An example of such situation is given by the third-order Memristor-Based canonical oscillator analyzed in Sec. 5 and for which $(m,k) = (1,2)$. We will see in the next Sec. 3 that the stability analysis of these \textit{pseudo singular points} will give rise to a condition for the existence of canard solutions in such systems.

\subsubsection{Pseudo singular manifolds -- Case $k \geqslant 3$}

If $k \geqslant 3$ the number of variables of system (\ref{eq1}) is equal to $n=k+1$ and the number of equations is still equal to $p=3$. So, only three variables (unknowns) of system (\ref{eq10}) can be determined while the remaining $k-2$ unknowns are undetermined. The solution of such system takes the form of a $(k-2)$-dimensional manifold that we call \textit{pseudo singular manifold}. An example of such situation is given by the fourth-order Memristor-Based canonical oscillator analyzed in Sec. 6 and for which $(m,k) = (1,3)$. We will see in Sec. 4 that for $k \geqslant 3$ the stability analysis of this \textit{pseudo singular manifold} will give rise to a condition (represented by a domain) for the existence of canard solutions in such systems.\\

\section{Three-dimensional singularly perturbed systems}
\label{Sec3}

A three-dimensional \textit{singularly perturbed dynamical system} (\ref{eq2}) with $k=2$ \textit{slow} variables and $m=1$ \textit{fast} may be written as:

\begin{subequations}
\label{eq11}
\begin{align}
 \dot{x}_1 & = f_1 \left( x_1, x_2, y_1 \right), \hfill \\
 \dot{x}_2 & = f_2 \left( x_1, x_2, y_1 \right), \hfill \\
 \varepsilon \dot{y}_1 & = g_1 \left( x_1, x_2, y_1 \right), \hfill
\end{align}
\end{subequations}

\smallskip

where $\vec{x}= (x_1, x_2)^t \in \mathbb{R}^2$, $\vec{y}= (y_1) \in \mathbb{R}$, $0 <~\varepsilon \ll~1$ and the functions $f_i$ and $g_i$ are assumed to be $C^2$ functions of $(x_1, x_2, y_1)$.

\subsection{Critical Manifold}

The critical manifold equation of system (\ref{eq11}) is defined by setting $\varepsilon = 0$ in Eq. (11c). Thus, we obtain:

\begin{equation}
\label{eq12}
g_1 \left( x_1, x_2, y_1 \right) = 0.
\end{equation}

\smallskip

By application of the Implicit Function Theorem, let suppose that we can explicitly express from Eq. (\ref{eq11}), say without loss of generality, $x_1$ as functions of the others variables:

\[
x_1 = \phi \left(x_2, y_1 \right)
\]

\subsection{Constrained system}

The \textit{constrained system} of system (\ref{eq11}) is obtained by equating to zero the time derivative of $g_1 \left(x_1, x_2, y_1 \right)$:

\begin{equation}
\label{eq13}
\frac{dg_1}{dt} = \frac{\partial g_1}{\partial x_1} \dot{x}_1 + \frac{\partial g_1}{\partial x_2} \dot{x}_2 + \frac{\partial g_1}{\partial y_1} \dot{y}_1 = 0
\end{equation}

By replacing $\dot{x}_i$ by $f_i \left( x_1, x_2, y_1 \right)$ with $i=1,2$, Eq. (\ref{eq13}) reads:

\begin{equation}
\label{eq14}
\dot{y}_1 = - \dfrac{ \dfrac{\partial g_1}{\partial x_1} f_1 + \dfrac{\partial g_1}{\partial x_2} f_2}{ \dfrac{\partial g_1}{\partial y_1} }.
\end{equation}

So, we have the following constrained system:

\begin{equation}
\label{eq15}
\begin{aligned}
 \dot{x}_1 & = f_1 \left( x_1, x_2, y_1 \right), \hfill \\
 \dot{x}_2 & = f_2 \left( x_1, x_2, y_1 \right), \hfill \\
 \dot{y}_1 & = - \dfrac{ \dfrac{\partial g_1}{\partial x_1} f_1 + \dfrac{\partial g_1}{\partial x_2} f_2 }{ \dfrac{\partial g_1}{\partial y_1} }, \hfill \\
 0 & = g_1 \left( x_1, x_2, y_1 \right). \hfill
\end{aligned}
\end{equation}

\subsection{Normalized slow dynamics}

By rescaling the time by setting $t = -  \dfrac{\partial g_1}{\partial y_1} \tau$ we obtain the ``normalized slow dynamics'':

\begin{equation}
\label{eq16}
\begin{aligned}
\dot{x}_1 & = - f_1 \left( x_1, x_2, y_1 \right) \dfrac{\partial g_1}{\partial y_1} = F_1 \left(x_1, x_2, y_1 \right), \hfill \vspace{6pt} \\
 \dot{x}_2 & = - f_2 \left( x_1, x_2, y_1 \right) \dfrac{\partial g_1}{\partial y_1} = F_2 \left(x_1, x_2, y_1 \right), \hfill \vspace{6pt} \\
 \dot{y}_1 & = \dfrac{\partial g_1}{\partial x_1} f_1 + \dfrac{\partial g_1}{\partial x_2} f_2  = G_1 \left(x_1, x_2, y_1 \right), \hfill \vspace{6pt} \\
 0 & = g_1 \left( x_1, x_2, y_1 \right). \hfill
\end{aligned}
\end{equation}

where the overdot now denotes the time derivation with respect to $\tau$\footnote{In the three-dimensional case $det(D_{\vec{y}} \vec{g}) = \partial g_1 / \partial y_1$.}.

\subsection{Desingularized  vector field}

Then, since we have supposed that $x_1$ may be explicitly expressed as a function $\phi \left(x_2, y_1 \right)$ on the others variables (Eq. \ref{eq12}), it can be used to project the ``normalized slow dynamics'' (\ref{eq16}) on the tangent bundle of the critical manifold. Thus we obtain the so-called ``desingularized vector field''

\begin{equation}
\label{eq17}
\begin{aligned}
 \dot{x_2} & = - f_2 \left( x_1 , x_2, y_1 \right) \frac{\partial g_1}{\partial y_1}\left( x_1, x_2, y_1 \right), \hfill \\
 \dot{y_1} & = \frac{\partial g_1}{\partial x_1} f_1\left(x_1, x_2, y_1 \right) + \frac{\partial g_1}{\partial x_2} f_2\left(x_1, x_2, y_1 \right).
\end{aligned}
\end{equation}

\smallskip

in which $x_1$ must be replaced by $\phi \left(x_2, y_1 \right)$.

\subsection{Pseudo-Singular Points}

\textit{Pseudo-singular points} are defined as singular points of the ``normalized slow dynamics'' (\ref{eq16}), \textit{i.e.} as the set of points for which we have:

\begin{subequations}
\label{eq18}
\begin{align}
 & g_1 \left( x_1, x_2, y_1 \right) = 0, \hfill \\
 & \dfrac{\partial g_1}{\partial y_1} = 0, \hfill \\
 & \dfrac{\partial g_1}{\partial x_1} f_1 + \dfrac{\partial g_1}{\partial x_2} f_2 = 0. \hfill
\end{align}
\end{subequations}

\smallskip

\begin{remark}
According to Arg\'{e}mi \cite{Argemi}, \textit{pseudo singular points} are singular points of (\ref{eq18}) but not necessarily singular points of (\ref{eq11}). In the following, we do not consider the case for which $f_1 ( x_1, x_2, y_1 ) = f_2 ( x_1, x_2, y_1 ) = g_1 ( x_1, x_2, y_1) = 0$. Let's notice that contrary to the previous works we don't use the ``desingularized vector field'' (\ref{eq17}) but the ``normalized slow dynamics'' (\ref{eq16}).
\end{remark}

Thus, the Jacobian matrix of system (\ref{eq16}) reads:

\begin{equation}
\label{eq19}
J_{(F_1, F_2, G_1)} = \begin{pmatrix}
\dfrac{\partial F_1}{\partial x_1 } \quad & \quad \dfrac{\partial F_1}{\partial x_2 } \quad & \quad \dfrac{\partial F_2}{\partial y_1 } \vspace{6pt} \\
\dfrac{\partial F_2}{\partial x_1 } \quad & \quad \dfrac{\partial F_2}{\partial x_2 } \quad &  \quad \dfrac{\partial F_2}{\partial y_1 } \vspace{6pt} \\
\dfrac{\partial G_1}{\partial x_1 } \quad & \quad \dfrac{\partial G_1}{\partial x_2 } \quad &  \quad \dfrac{\partial G_1}{\partial y_1 } \vspace{6pt}
\end{pmatrix}
\end{equation}

\smallskip

\subsection{Beno\^{i}t's generic hypothesis}

In his famous papers, Eric Beno\^{i}t \cite{BenoitLobry,Benoit1983,Benoit1990} made the following assumptions without loss of generality. First, he supposed that by a ``standard translation'' the \textit{pseudo-singular point} can be shifted at the origin and that by a ``standard rotation'' of $y_1$-axis that the \textit{critical manifold} (\ref{eq12}) is tangent to ($x_2,y_1$)-plane, so he had:

\begin{equation}
\label{eq20}
f_1 \left(0, 0, 0 \right) = g_1 \left( 0, 0, 0 \right) = \left.  \dfrac{\partial g_1}{\partial x_2} \right|_{(0,0,0)} = \left.  \dfrac{\partial g_1}{\partial y_1} \right|_{(0,0,0)} = 0.
\end{equation}

\smallskip

Then, he made the following assumptions for the non-degeneracy of the \textit{pseudo-singular point}:

\begin{equation}
\label{eq21}
f_2 \left(0, 0, 0 \right) \neq 0 \mbox{ ;}  \left.  \dfrac{\partial g_1}{\partial x_1} \right|_{(0,0,0)} \neq 0 \mbox{ ;} \left.  \dfrac{\partial^2 g_1}{\partial y_1^2} \right|_{(0,0,0)} \neq 0
\end{equation}

\smallskip

According to Beno\^{i}t's generic hypotheses Eqs. (\ref{eq20}-\ref{eq21}), the Jacobian matrix (\ref{eq19}) reads:

\begin{equation}
\label{eq22}
J_{(F_1, F_2, G_1)} = \begin{pmatrix}
0 & 0 & 0 \vspace{6pt} \\
-f_2 \dfrac{\partial^2 g_1}{\partial x_1 \partial y_1} &  -f_2 \dfrac{\partial^2 g_1}{\partial x_2 \partial y_1} &  - f_2 \dfrac{\partial^2 g_1}{\partial y_1^2} \vspace{6pt} \\
a_{31} & a_{32} & a_{33} \vspace{6pt}
\end{pmatrix}
\end{equation}

\smallskip

where

\[
\begin{aligned}
a_{3i} & = \dfrac{\partial g_1}{\partial x_1}\dfrac{\partial f_1}{\partial x_i} + f_2 \dfrac{\partial^2 g_1}{\partial x_2 \partial x_i} \mbox{ for } i =1,2,\\
a_{33} & = \dfrac{\partial g_1}{\partial x_1}\dfrac{\partial f_1}{\partial y_1} + f_2 \dfrac{\partial^2 g_1}{\partial x_2 \partial y_1}.
\end{aligned}
\]

\smallskip

Thus, we have the following Cayley-Hamilton eigenpolynomial associated with such a Jacobian matrix (\ref{eq22}) evaluated at the \textit{pseudo singular point}, \textit{i.e.}, at the origin:

\begin{equation}
\label{eq23}
\lambda^3 - \sigma_1 \lambda^2 + \sigma_2 \lambda - \sigma_3 = 0
\end{equation}

\smallskip

It appears that $\sigma_3 = |J_{(F_1, F_2, G_1)}| = 0$ since one row of the Jacobian matrix (\ref{eq22}) is null. So, the Cayley-Hamilton eigenpolynomial reduces to:

\begin{equation}
\label{eq24}
\lambda \left(\lambda^2 - \sigma_1 \lambda + \sigma_2 \right) = 0
\end{equation}

\smallskip

Let $\lambda_i$ be the eigenvalues of the eigenpolynomial (\ref{eq24}) and let's denote by $\lambda_3 = 0$ the obvious root of this polynomial. We have:

\begin{equation}
\label{eq25}
\begin{aligned}
\sigma_1 = & Tr(J_{(F_1, F_2, G_1)}) = \lambda_1 + \lambda_2 = \dfrac{\partial g_1}{\partial x_1}\dfrac{\partial f_1}{\partial y_1}, \hfill  \\
\sigma_2  = & \sum_{i=1}^3 \left| J_{(F_1, F_2, G_1)}^{ii} \right| = \lambda_1\lambda_2 \hfill \\
  = & f_2^2 \left( \dfrac{\partial^2 g_1}{\partial x_2^2 } \dfrac{\partial^2 g_1}{\partial y_1^2} - \left(\dfrac{\partial^2 g_1}{\partial x_2 \partial y_1}\right)^2 \right)\\
  & + f_2 \dfrac{\partial g_1}{\partial x_1} \left( \dfrac{\partial^2 g_1}{\partial y_1^2 }\dfrac{\partial f_1}{\partial x_2} - \dfrac{\partial^2 g_1}{\partial x_2 \partial y_1 } \dfrac{\partial f_1}{\partial y_1} \right).
\end{aligned}
\end{equation}

\smallskip
where $\sigma_1 = Tr(J_{(F_1, F_2, G_1)}) = p$ is the sum of all first-order diagonal minors of $J_{(F_1, F_2, G_1)}$, \textit{i.e.} the trace of $J_{(F_1, F_2, G_1)}$ and $\sigma_2 = \sum_{i=1}^3 \left| J_{(F_1, F_2, G_1)}^{ii} \right| = q $ represents the sum of all second-order diagonal minors of $J_{(F_1, F_2, G_1)}$. Thus, the \textit{pseudo singular point} is of saddle-type \textit{iff} the following conditions $C_1$ and $C_2$ are verified:

\begin{equation}
\label{eq26}
\begin{aligned}
C_1:& \quad \Delta = p^2 - 4q > 0, \hfill \\
C_2:& \quad q < 0.
\end{aligned}
\end{equation}

\smallskip

Condition $C_1$ is systematically satisfied provided that condition $C_2$ is verified. Thus, the \textit{pseudo singular point} is of saddle-type \textit{iff} $q < 0$.

\smallskip

\subsection{Canard existence in $\mathbb{R}^3$}

In an article entitled ``Syst\`{e}mes lents-rapides dans $\mathbb{R}^3$ et leurs canards'', Beno\^{i}t \cite[p. 171]{Benoit1983} has stated in the framework of ``non-standard analysis'' a theorem that can be written as follows:

\begin{flushleft}
\textbf{Beno\^{i}t's theorem [1983]}\hfill \\
\end{flushleft}

\textit{If the desingularized vector field {\rm (17)} has a \textit{pseudo singular point} of saddle type, then system {\rm (11)} exhibits a canard solution which evolves from the attractive part of the slow manifold towards its repelling part.}

\begin{proof}
See Beno\^{i}t [1983].
\end{proof}

In his work, Beno\^{i}t \cite[p. 168]{Benoit1983} computed the trace $T$ and determinant $D$ of the Jacobian matrix $J_{(F_2, G_1)}$ associated with the two-dimensional desingularized vector field {\rm (17)}. Taking into account his generic hypotheses Eqs. (\ref{eq20}-\ref{eq21}) he found that:

\begin{equation}
\label{eq27}
\begin{aligned}
T = & Tr(J_{(F_2, G_1)}) = \lambda_1 + \lambda_2 =  \dfrac{\partial g_1}{\partial x_1}\dfrac{\partial f_1}{\partial y_1}, \hfill  \\
D = & \left| J_{(F_2, G_1)} \right| = \lambda_1\lambda_2 \hfill \\
   = & f_2^2 \left( \dfrac{\partial^2 g_1}{\partial x_2^2 } \dfrac{\partial^2 g_1}{\partial y_1^2} - \left(\dfrac{\partial^2 g_1}{\partial x_2 \partial y_1}\right)^2 \right)\\
   & + f_2 \dfrac{\partial g_1}{\partial x_1} \left( \dfrac{\partial^2 g_1}{\partial y_1^2 }\dfrac{\partial f_1}{\partial x_2} - \dfrac{\partial^2 g_1}{\partial x_2 \partial y_1 } \dfrac{\partial f_1}{\partial y_1} \right).
\end{aligned}
\end{equation}

\smallskip

from which he established that the \textit{pseudo singular point} is of saddle type provided that $D < 0$. Then, Beno\^{i}t \cite[p. 171]{Benoit1983} stated his theorem.\\

In a paper entitled ``Canards et enlacements'', Beno\^{i}t \cite{Benoit1990} stated, while using a standard polynomial change of variables (see Appendix A), that the original system (\ref{eq11}) can be transformed into the following ``normal version'':

\begin{equation}
\label{eq28}
\begin{aligned}
\dot{x}_1 & =  a x_2 + b y_1 + O \left( x_1, \varepsilon, x_2^2, x_2 y_1, y_1^2 \right), \hfill \\
\dot{x}_2 & = 1 + O \left( x_1, x_2, y_1, \varepsilon \right), \hfill \\
\varepsilon \dot{y}_1 & = - \left( x_1 + y_1^2 \right) + O \left( \varepsilon x_1, \varepsilon x_2, \varepsilon y_1, \varepsilon^2, x_1^2 y_1, y_1^3, x_1 x_2 y_1  \right), \hfill
\end{aligned}
\end{equation}

\smallskip

where he established that

\[
\begin{aligned}
a = & \frac{1}{2} f_2^2 \left( \dfrac{\partial^2 g_1}{\partial x_2^2 } \dfrac{\partial^2 g_1}{\partial y_1^2} - \left(\dfrac{\partial^2 g_1}{\partial x_2 \partial y_1}\right)^2 \right)\\
& + \frac{1}{2} f_2 \dfrac{\partial g_1}{\partial x_1} \left( \dfrac{\partial^2 g_1}{\partial y_1^2 }\dfrac{\partial f_1}{\partial x_2} - \dfrac{\partial^2 g_1}{\partial x_2 \partial y_1 } \dfrac{\partial f_1}{\partial y_1} \right), \hfill \\
b & = - \dfrac{\partial g_1}{\partial x_1}\dfrac{\partial f_1}{\partial y_1}, \hfill  \\
\end{aligned}
\]

\smallskip

A few years later, Szmolyan and Wechselberger \cite{SzmolyanWechselberger2001} gave a ``standard version'' of Beno\^{i}t's theorem \cite{Benoit1983} (see Beno\^{i}t's theorem above) for three-dimensional singularly perturbed systems with $k=2$ \textit{slow} variables and $m=1$ \textit{fast}. While using ``standard analysis'' and blow-up technique, Szmolyan and Wechselberger \cite[p. 427]{SzmolyanWechselberger2001} stated in their Lemma 2.1, while using ``a smooth change of coordinates'' (see Appendix A), that the original system (\ref{eq11}) can be transformed into the ``normal form'' (\ref{eq28}) from which they deduced that the condition for the \textit{pseudo singular point} to be of saddle type is $a < 0$. Then, they proved the existence of canard solutions for the original system (\ref{eq11}) according to their Theorem 4.1(a) presented below.

\begin{theo}\hfill \\
\label{theo1}
Assume system {\rm (28)}. In the folded saddle and in the folded node case singular canards perturb to maximal canards solutions for sufficiently small~$\varepsilon$.
\end{theo}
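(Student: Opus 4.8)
\medskip
\noindent\textbf{Proof strategy.}
The plan is to follow the geometric desingularization (blow-up) method, since the difficulty is exactly that normal hyperbolicity of the critical manifold $M_0=\{x_1+y_1^2=0\}$ of (28) is lost along the fold curve, and the folded singularity sits at the origin, where Fenichel theory does not apply directly. First I would put (28) on the slow time scale and desingularize the reduced flow, obtaining on $M_0$ the planar vector field (17), whose linearization at the origin has trace $T$ and determinant $D$ as in (27). By Lemma 2.1 the folded-saddle hypothesis is $D<0$, and the folded-node hypothesis is $D>0$ with $T^2-4D>0$ (and $T$ of the sign making the origin attracting for the desingularized flow); hence in both cases the origin is a hyperbolic equilibrium of the desingularized reduced flow with eigendirections transverse to the fold. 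The singular canards are the orbits of this flow crossing the fold from the attracting sheet $M_{a,0}$ to the repelling sheet $M_{r,0}$: in the saddle case the stable and unstable separatrices give the singular canard and the faux canard, in the node case the weak and strong eigendirections bound a whole sector of them.

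Next I would blow up the origin with weights adapted to (28): $x_1=r^2\bar x_1$, $y_1=r\,\bar y_1$, $\varepsilon=r^2\bar\varepsilon$ with $(\bar x_1,\bar y_1,\bar\varepsilon)$ on the unit sphere, keeping $x_2$ as a regular parameter because $\dot x_2=1+O(\cdot)$ is time-like. After dividing the pulled-back vector field by the appropriate power of $r$ one obtains a smooth system whose singular set on the blow-up sphere is again partially normally hyperbolic, so Fenichel/center-manifold arguments apply chart by chart. In the entry chart $K_1$ and the exit chart $K_3$ the attracting and repelling Fenichel manifolds $M_{a,\varepsilon}$, $M_{r,\varepsilon}$ of (28) enter the blow-up and, by the exchange lemma, are tracked as $O(\varepsilon)$-perturbations of $M_{a,0}$, $M_{r,0}$. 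In the central rescaling chart $K_2$ ($\bar\varepsilon=1$) the blown-up flow is a \emph{regular} perturbation problem whose principal part is an autonomous planar system: a linear saddle in the folded-saddle case, a linear node in the folded-node case, whose invariant separatrix (resp. weak manifold) is the continuation into the blow-up of the singular canard (resp. of the funnel of singular canards).

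The core of the argument is the matching: one must show that $M_{a,\varepsilon}$, continued forward through $K_1\to K_2\to K_3$, transversally meets (in a suitable cross-section) $M_{r,\varepsilon}$ continued backward, and that the resulting intersection is $O(\varepsilon)$-close to the singular canard. In the saddle case this follows because both manifolds lie, inside $K_2$, exponentially close to the unique saddle separatrix of the desingularized reduced flow, which connects the in-flowing and out-flowing hyperbolic directions of the blow-up; a transversality estimate then produces a locally unique maximal canard $M_{a,\varepsilon}\cap M_{r,\varepsilon}$ (and likewise a faux maximal canard) converging to the singular canard as $\varepsilon\to0$. In the node case the same mechanism applies to the full sector of singular canards: in $K_2$ every orbit of the sector is forward-asymptotic to the weak eigendirection, the strong singular canard perturbs, and together with the fold this bounds a persisting funnel of maximal canards, their finer structure governed by the eigenvalue ratio $\mu=\lambda_1/\lambda_2$.

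The main obstacle is precisely the control of the passage through $K_2$ and the $C^1$-closeness of the continued invariant manifolds to the model separatrix (weak manifold) there, together with two points that must be handled carefully: in the folded-node case the resonant values of $\mu$, at which secondary canards bifurcate and the generic count of maximal canards changes, must be excluded; and in both cases one must verify that the higher-order remainder terms displayed in (28) do not destroy the transversality established for the truncated model system — which is where the estimates on the blow-up vector field and the exchange lemma have to be made quantitative.
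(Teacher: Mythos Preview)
Your outline is a faithful sketch of the geometric blow-up proof due to Szmolyan and Wechselberger, and the key ingredients --- weighted blow-up $x_1=r^2\bar x_1$, $y_1=r\bar y_1$, $\varepsilon=r^2\bar\varepsilon$, the three-chart analysis, the rescaling chart $K_2$ producing a regular perturbation of a linear saddle/node, and the transversal matching of the extended Fenichel manifolds --- are correctly identified.

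However, the paper itself does not prove this theorem. Its entire proof reads ``See Szmolyan and Wechselberger [2001]'': the result is quoted from the literature and used as a black box, since the focus of the paper is on the \emph{criterion} $\sigma_2<0$ for recognizing the folded-saddle case directly from the normalized slow dynamics, not on re-establishing the persistence of singular canards. So you have done considerably more than the paper does --- you have sketched the argument of the cited reference rather than invoking it. Your strategy is the correct one for an actual proof; just be aware that in the context of this paper no such argument is expected, only the citation.
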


\begin{proof}
See Szmolyan and Wechselberger \cite{SzmolyanWechselberger2001}.
\end{proof}

As previously recalled, the method presented in this paper doesn't use the ``desingularized vector field'' (\ref{eq17}) but the ``normalized slow dynamics'' (\ref{eq16}). So, we have the following proposition:

\smallskip

\begin{prop}\hfill \\
\label{prop1}
If the normalized slow dynamics {\rm (16)} has a \textit{pseudo singular point} of saddle type, \textit{i.e.} if the sum $\sigma_2$ of all second-order diagonal minors of the Jacobian matrix of the normalized slow dynamics {\rm (16)} evaluated at the \textit{pseudo singular point} is negative, \textit{i.e.} if $\sigma_2 < 0$ then, according to Theorem \ref{theo1}, system {\rm (11)} exhibits a canard solution which evolves from the attractive part of the slow manifold towards its repelling part.
\end{prop}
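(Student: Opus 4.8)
The plan is to deduce Proposition~\ref{prop1} from Theorem~\ref{theo1} by showing that the saddle-type condition ``$\sigma_2<0$'' attached to the \textit{normalized slow dynamics} {\rm (16)} is exactly the folded-saddle condition ``$a<0$'' attached to the \textit{normal form} {\rm (28)} of system {\rm (11)}. After shifting the \textit{pseudo singular point} to the origin and normalizing as in {\rm (20)}--{\rm (21)}, I would first record the Jacobian {\rm (22)} of {\rm (16)} at the origin: its first row vanishes, so $\lambda_3=0$ is an eigenvalue and the characteristic polynomial factors as in {\rm (24)}. The two remaining eigenvalues $\lambda_1,\lambda_2$ are the roots of $\lambda^2-\sigma_1\lambda+\sigma_2=0$, where $\sigma_1=\mathrm{Tr}\,J_{(F_1,F_2,G_1)}$ and $\sigma_2$ is the sum of the three second-order diagonal minors; because the top row is zero, two of those minors vanish and $\sigma_2=|J^{11}_{(F_1,F_2,G_1)}|=\lambda_1\lambda_2$, which is the explicit expression {\rm (25)}.

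Next I would compare {\rm (25)} with Beno\^{i}t's computation {\rm (27)} of the trace $T$ and the determinant $D$ of the $2\times 2$ Jacobian $J_{(F_2,G_1)}$ of the \textit{desingularized vector field} {\rm (17)}: term by term one finds $\sigma_1=T$ and $\sigma_2=D$. This coincidence is not an accident. The field {\rm (17)} is precisely {\rm (16)} restricted to the critical manifold $g_1=0$, written over the base $(x_2,y_1)$ via $x_1=\phi(x_2,y_1)$; since Beno\^{i}t's normalization {\rm (20)} forces $\partial g_1/\partial x_2=\partial g_1/\partial y_1=0$ at the origin, the derivatives $\partial\phi/\partial x_2$ and $\partial\phi/\partial y_1$ vanish there, so every chain-rule term generated by the substitution drops out at the \textit{pseudo singular point}. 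Moreover the line $\{\delta x_1=0\}$ is the tangent space to $M_0$ at the origin (as $\partial g_1/\partial x_1\ne 0$) and is invariant under {\rm (22)} (its first row being zero), so the linearization of the constrained flow is exactly the restriction of {\rm (22)} to that line, i.e.\ $J_{(F_2,G_1)}$. Hence the two nonzero eigenvalues of {\rm (22)} are precisely the eigenvalues of $J_{(F_2,G_1)}$, and ``saddle'' has the same meaning for {\rm (16)} as for {\rm (17)}.

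It then remains to close the loop with the elementary facts already used in the excerpt. For the quadratic $\lambda^2-\sigma_1\lambda+\sigma_2$, the hypothesis $\sigma_2<0$ forces the discriminant $\sigma_1^2-4\sigma_2>0$ (so condition $C_1$ is automatic), whence $\lambda_1,\lambda_2$ are real, and $\lambda_1\lambda_2=\sigma_2<0$ makes them of opposite sign: the \textit{pseudo singular point} is of saddle type, the converse being immediate. Thus ``saddle type'' $\Longleftrightarrow\sigma_2<0\Longleftrightarrow D<0$. Furthermore the expression for $a$ displayed after {\rm (28)} is exactly $a=\tfrac12\sigma_2$ (equivalently $a=\tfrac12 D$, cf.\ {\rm (27)}), so $\sigma_2<0\Longleftrightarrow a<0$, which is the folded-saddle condition of Szmolyan and Wechselberger. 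Applying Theorem~\ref{theo1} to the normal form {\rm (28)} of system {\rm (11)} now gives that the singular canard through this folded saddle perturbs to a maximal canard for all sufficiently small $\varepsilon$, i.e.\ a solution of {\rm (11)} that follows the attracting branch $M_a$ of the slow manifold and then its repelling branch $M_r$; this is the claim of the proposition (and alternatively one may invoke Beno\^{i}t's theorem directly through $D<0$). The only step carrying genuine content is the identity $\sigma_2=D$ — that reducing the three-dimensional \textit{normalized slow dynamics} to the two-dimensional \textit{desingularized vector field} preserves the relevant spectral data at the \textit{pseudo singular point} — and it is Beno\^{i}t's normalization {\rm (20)} that renders the substitution $x_1=\phi(x_2,y_1)$ harmless to first order there; everything else is a direct comparison of the explicit formulas {\rm (25)}, {\rm (27)} and the expression for $a$, together with the observation that $\sigma_2<0$ makes $C_1$ redundant.
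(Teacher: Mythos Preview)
Your proposal is correct and follows essentially the same route as the paper: establish the identities $\sigma_1=T=-b$ and $\sigma_2=D=2a$ by direct comparison of the explicit formulas {\rm (25)}, {\rm (27)} and the expression for $a$ after {\rm (28)}, then invoke Theorem~\ref{theo1} (or equivalently Beno\^{i}t's theorem via $D<0$). The paper's proof is in fact terser than yours --- it simply asserts ``it is easy to verify'' the identities {\rm (29)} --- whereas you additionally supply the structural reason (vanishing of $\partial\phi/\partial x_2$, $\partial\phi/\partial y_1$ at the origin under the normalization {\rm (20)}, and invariance of $\{\delta x_1=0\}$) explaining \emph{why} the restriction from {\rm (16)} to {\rm (17)} preserves the nonzero spectrum; this extra explanation is sound but not present in the paper.
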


\begin{proof}

According to Eqs. (\ref{eq25},\ref{eq27}) it is easy to verify that:

\begin{equation}
\label{eq29}
\begin{aligned}
\sigma_1 & = Tr(J_{(F_1, F_2, G_1)}) = Tr(J_{(F_2, G_1)}) = T = \lambda_1 + \lambda_2 = -b, \hfill  \\
\sigma_2 & = \sum_{i=1}^3 \left| J_{(F_1, F_2, G_1)}^{ii} \right| = \left| J_{(F_2, G_1)} \right| = D = \lambda_1\lambda_2 = 2a.
\end{aligned}
\end{equation}

\smallskip

So, the condition for which the \textit{pseudo singular point} is of saddle type, \textit{i.e.} $\sigma_2 < 0$ is identical to that proposed by Beno\^{i}t [1983, p. 171] in his theorem, \textit{i.e.} $D < 0$ and also to that provided by Szmolyan and Wechselberger \cite{SzmolyanWechselberger2001}, \textit{i.e.} $a < 0$. So, Prop. \ref{prop1} can be used to state the existence of canard solution for such systems.

\end{proof}

Of course, in the three-dimensional case the proof is obvious. We will see in the next Sect. 4 that for four-dimensional singularly perturbed systems this is not the case. Application of Proposition 1 to the three-dimensional memristor canonical Chua's circuits, presented in Sec. 5, will enable to prove the existence of generic ``canard solutions'' in such Memristor Based Chaotic Circuits.

\section{Four-dimensional singularly perturbed systems}
\label{Sec4}

A four-dimensional \textit{singularly perturbed dynamical system} (\ref{eq2}) with $k=3$ \textit{slow} variables and $m=1$ \textit{fast} may be written as:

\begin{subequations}
\label{eq30}
\begin{align}
 \dot{x}_1 & = f_1 \left( x_1, x_2, x_3, y_1 \right), \hfill \\
 \dot{x}_2 & = f_2 \left( x_1, x_2, x_3, y_1 \right), \hfill \\
 \dot{x}_3 & = f_3 \left( x_1, x_2, x_3, y_1 \right), \hfill \\
 \varepsilon \dot{y}_1 & = g_1 \left( x_1, x_2, x_3, y_1 \right), \hfill
\end{align}
\end{subequations}

\smallskip

$\!$ where $\vec{x}= (x_1, x_2, x_3)^t \in \mathbb{R}^3$, $\vec{y}= (y_1) \in \mathbb{R}$, $0<~\varepsilon~\ll~1$ and the functions $f_i$ and $g_i$ are assumed to be $C^2$ functions of $(x_1, x_2, x_3, y_1)$.

\subsection{Critical Manifold}

The critical manifold equation of system (\ref{eq30}) is defined by setting $\varepsilon = 0$ in Eq. (30d). Thus, we obtain:

\begin{equation}
\label{eq31}
g_1 \left( x_1, x_2, x_3, y_1 \right) = 0.
\end{equation}

By application of the Implicit Function Theorem, let suppose that we can explicitly express from Eq. (\ref{eq31}), say without loss of generality, $x_1$ as functions of the others variables:

\begin{equation}
\label{eq32}
x_1 = \phi_1\left( x_2, x_3, y_1 \right).
\end{equation}

\subsection{Constrained system}

The \textit{constrained system} is obtained by equating to zero the time derivative of $g_1 \left(x_1, x_2, x_3, y_1 \right)$:

\begin{equation}
\label{eq33}
\frac{dg_1}{dt} = \frac{\partial g_1}{\partial x_1} \dot{x}_1 + \frac{\partial g_1}{\partial x_2} \dot{x}_2 + \frac{\partial g_1}{\partial x_3} \dot{x}_3 + \frac{\partial g_1}{\partial y_1} \dot{y}_1 = 0
\end{equation}

By replacing $\dot{x}_i$ by $f_i \left( x_1, x_2, x_3, y_1 \right)$ with $i=1,2,3$, Eqs. (\ref{eq33}) may be written as:

\begin{equation}
\label{eq34}
\dot{y}_1 = - \dfrac{ \dfrac{\partial g_1}{\partial x_1} f_1 + \dfrac{\partial g_1}{\partial x_2} f_2 + \dfrac{\partial g_1}{\partial x_3} f_3 }{ \dfrac{\partial g_1}{\partial y_1} }.
\end{equation}

\smallskip

So, we have the following constrained system:

\begin{equation}
\label{eq35}
\begin{aligned}
 \dot{x}_1 & = f_1 \left( x_1, x_2, x_3, y_1 \right), \hfill \\
 \dot{x}_2 & = f_2 \left( x_1, x_2, x_3, y_1 \right), \hfill \\
 \dot{x}_3 & = f_3 \left( x_1, x_2, x_3, y_1 \right), \hfill \\
 \dot{y}_1 & = - \dfrac{ \dfrac{\partial g_1}{\partial x_1} f_1 + \dfrac{\partial g_1}{\partial x_2} f_2 + \dfrac{\partial g_1}{\partial x_3} f_3 }{ \dfrac{\partial g_1}{\partial y_1} }, \hfill \\
 0 & = g_1 \left( x_1, x_2, x_3, y_1 \right). \hfill
\end{aligned}
\end{equation}

\subsection{Normalized slow dynamics}

By rescaling the time by setting $t = -  \dfrac{\partial g_1}{\partial y_1} \tau$ we obtain the ``normalized slow dynamics'':

\begin{equation}
\label{eq36}
\begin{aligned}
\dot{x}_1 & = - f_1 \left( x_1, x_2, x_3, y_1 \right) \dfrac{\partial g_1}{\partial y_1} = F_1 \left( x_1, x_2, x_3, y_1 \right), \hfill \vspace{6pt} \\
 \dot{x}_2 & = - f_2 \left( x_1, x_2, x_3, y_1 \right) \dfrac{\partial g_1}{\partial y_1} = F_2 \left( x_1, x_2, x_3, y_1 \right), \hfill \vspace{6pt} \\
 \dot{x}_3 & = - f_3 \left( x_1, x_2, x_3, y_1 \right) \dfrac{\partial g_1}{\partial y_1} = F_3 \left( x_1, x_2, x_3, y_1 \right), \hfill \vspace{6pt} \\
 \dot{y}_1 & = \dfrac{\partial g_1}{\partial x_1} f_1 + \dfrac{\partial g_1}{\partial x_2} f_2 + \dfrac{\partial g_1}{\partial x_3} f_3 = G_1 \left( x_1, x_2, x_3, y_1 \right), \hfill \vspace{6pt} \\
 0 & = g_1 \left( x_1, x_2, x_3, y_1 \right). \hfill
\end{aligned}
\end{equation}

\smallskip

where the overdot now denotes the time derivation with respect to $\tau$.

\subsection{Desingularized vector field}

Then, since we have supposed that $x_1$ may be explicitly expressed as a function $\phi_1$ of the others variables (\ref{eq32}), it can be used to project the ``normalized slow dynamics'' (\ref{eq36}) on the tangent bundle of the critical manifold. So, we have:

\begin{equation}
\label{eq37}
\begin{aligned}
 \dot{x}_2 & = - f_2 \left( x_1, x_2, x_3, y_1 \right) \dfrac{\partial g_1}{\partial y_1}, \hfill \vspace{6pt} \\
 \dot{x}_3 & = - f_3 \left( x_1, x_2, x_3, y_1 \right) \dfrac{\partial g_1}{\partial y_1}, \hfill \vspace{6pt}\\
 \dot{y}_1 & = \dfrac{\partial g_1}{\partial x_1} f_1 + \dfrac{\partial g_1}{\partial x_2} f_2 + \dfrac{\partial g_1}{\partial x_3} f_3 . \hfill
\end{aligned}
\end{equation}

\smallskip

in which $x_1$ must be replaced by $\phi_1\left( x_2, x_3, y_1 \right)$.

\subsection{Pseudo singular manifold}

\textit{Pseudo singular manifold} is defined as singular solution of the ``normalized slow dynamics'' (\ref{eq36}), so we have:

\begin{subequations}
\label{eq38}
\begin{align}
 & g_1 \left( x_1, x_2, x_3, y_1 \right) = 0, \hfill \\
 & \dfrac{\partial g_1}{\partial y_1} = 0, \hfill \\
 & \dfrac{\partial g_1}{\partial x_1} f_1 + \dfrac{\partial g_1}{\partial x_2} f_2 + \dfrac{\partial g_1}{\partial x_3} f_3 = 0. \hfill
\end{align}
\end{subequations}

\smallskip

\begin{remark}
In the case of a four-dimensional singularly perturbed system with $k=3$ \textit{slow} variables and $m=1$ \textit{fast}, \textit{pseudo singular manifold} forms a $(k-2)$-dimensional manifold, \textit{i.e.} a $1$-dimensional manifold since the system (\ref{eq38}) comprises $p=3$ equations and $n=4$ variables (unknowns). So, in spite of having a \textit{pseudo singular point} ($\tilde{x}_{1}, \tilde{x}_{2}, \tilde{x}_{3}, \tilde{y}_{1}$) we have \textit{pseudo singular manifold} represented by, say without loss of generality, ($\tilde{x}_{1},x_2, \tilde{x}_{3}, \tilde{y}_{1}$), where $x_2$ is undetermined.\\

Let's notice again that contrary to the previous works we don't use the ``desingularized vector field'' (\ref{eq37}) but the ``normalized slow dynamics'' (\ref{eq36}).
\end{remark}

The Jacobian matrix of system (\ref{eq36}) reads:

\begin{equation}
\label{eq39}
J_{(F_1, F_2, F_3, G_1)} = \begin{pmatrix}
\dfrac{\partial F_1}{\partial x_1 } \  &  \  \dfrac{\partial F_1}{\partial x_2 } \  &  \  \dfrac{\partial F_1}{\partial x_3 } \  &  \  \dfrac{\partial F_1}{\partial y_1 } \vspace{6pt} \\
\dfrac{\partial F_2}{\partial x_1 } \  &  \  \dfrac{\partial F_2}{\partial x_2 } \  &  \  \dfrac{\partial F_2}{\partial x_3 } \  &  \  \dfrac{\partial F_2}{\partial y_1 } \vspace{6pt} \\
\dfrac{\partial F_3}{\partial x_1 } \  &  \  \dfrac{\partial F_3}{\partial x_2 } \  &  \  \dfrac{\partial F_3}{\partial x_3 } \  &  \  \dfrac{\partial F_3}{\partial y_1 } \vspace{6pt} \\
\dfrac{\partial G_1}{\partial x_1 } \  &  \  \dfrac{\partial G_1}{\partial x_2 } \  &  \  \dfrac{\partial G_1}{\partial x_3 } \  &  \  \dfrac{\partial G_1}{\partial y_1 } \vspace{6pt}
\end{pmatrix}
\end{equation}

\smallskip

\subsection{Extension of Beno\^{i}t's generic hypothesis}

Without loss of generality, it seems reasonable to extend Beno\^{i}t's generic hypotheses introduced for the three-dimensional case to the four-dimensional case. So, first let's suppose that by a ``standard translation'' the \textit{pseudo singular manifold} can be transformed into ($0,x_2, 0, 0$) and that by a ``standard rotation'' of $y_1$-axis that the \textit{critical manifold} (\ref{eq31}) is tangent to ($x_2, x_3, y_1$)-hyperplane, so we have

\begin{equation}
\label{eq40}
\begin{aligned}
& f_1 \left(0, x_2, 0, 0 \right) = g_1 \left( 0, x_2, 0, 0 \right) = 0 \\
& \left.  \dfrac{\partial g_1}{\partial x_2} \right|_{(0,x_2,0,0)} = \left.  \dfrac{\partial g_1}{\partial x_3} \right|_{(0,x_2,0,0)} = \left.  \dfrac{\partial g_1}{\partial y_1} \right|_{(0,x_2,0, 0)} = 0
\end{aligned}
\end{equation}

Then, let's make the following assumptions for the non-degeneracy of the \textit{folded singularity}:

\begin{equation}
\label{eq41}
f_2 \left(0, x_2, 0, 0 \right) \neq 0 \mbox{ ; } \left.  \dfrac{\partial g_1}{\partial x_1} \right|_{(0,x_2,0,0)} \neq 0 \mbox{ ; } \left.  \dfrac{\partial^2 g_1}{\partial y_1^2} \right|_{(0,x_2,0,0)} \neq 0.
\end{equation}

\smallskip

According to these generic hypotheses Eqs. (\ref{eq40}-\ref{eq41}), the Jacobian matrix (\ref{eq39}) reads:

\begin{equation}
\label{eq42}
J_{(F_1, F_2, F_3, G_1)} = \begin{pmatrix}
0  &  0  &  0  &  0  \vspace{6pt} \\
a_{21}  &  a_{22}   &  a_{23}  &  a_{24} \vspace{6pt} \\
a_{31}  &  a_{32}   &  a_{33}  &  a_{34} \vspace{6pt} \\
a_{41}  &  a_{42}   &  a_{43}  &  a_{44} \vspace{6pt}
\end{pmatrix}
\end{equation}

\smallskip

where

\smallskip

\[
\begin{aligned}
a_{2i} = & -f_2 \dfrac{\partial^2 g_1}{\partial x_i \partial y_1} \mbox{ for } i =1,2,3,\\
a_{24} = & - f_2 \dfrac{\partial^2 g_1}{\partial y_1^2} \\
a_{3i} = & -f_3 \dfrac{\partial^2 g_1}{\partial x_i \partial y_1} \mbox{ for } i =1,2,3,\\
a_{34} = & - f_3 \dfrac{\partial^2 g_1}{\partial y_1^2} \\
a_{4i} = & f_1 \dfrac{\partial^2 g_1}{\partial x_1 \partial x_i} + \dfrac{\partial g_1}{\partial x_1}\dfrac{\partial f_1}{\partial x_i} + f_2 \dfrac{\partial^2 g_1}{\partial x_2 \partial x_i} + \dfrac{\partial g_1}{\partial x_2}\dfrac{\partial f_2}{\partial x_i}\\
&+ f_3 \dfrac{\partial^2 g_1}{\partial x_3 \partial x_i} + \dfrac{\partial g_1}{\partial x_3}\dfrac{\partial f_3}{\partial x_i} \mbox{ for } i =1,2,3,\\
a_{44} = & f_1 \dfrac{\partial^2 g_1}{\partial x_1 \partial y_1} + \dfrac{\partial g_1}{\partial x_1}\dfrac{\partial f_1}{\partial y_1} + f_2 \dfrac{\partial^2 g_1}{\partial x_2 \partial y_1} + \dfrac{\partial g_1}{\partial x_2}\dfrac{\partial f_2}{\partial y_1}\\
& + f_3 \dfrac{\partial^2 g_1}{\partial x_3 \partial y_1} + \dfrac{\partial g_1}{\partial x_3}\dfrac{\partial f_3}{\partial y_1}.
\end{aligned}
\]

\smallskip

In his paper Wechselberger \cite{Wechselberger2012} stated that the determinant of the Jacobian matrix associated to the ``desingularized vector field'' and evaluated at a \textit{folded singularity}, \textit{i.e.} on the \textit{pseudo singular manifold} is always zero\footnote{This result will be proved below.}.

\smallskip

Thus, we have the following Cayley-Hamilton eigenpolynomial associated with such a Jacobian matrix (\ref{eq41}) evaluated on the \textit{pseudo singular manifold}, \textit{i.e.}, at ($0, x_2, 0, 0$):

\begin{equation}
\label{eq43}
\lambda^4 - \sigma_1 \lambda^3 + \sigma_2 \lambda^2 - \sigma_3 \lambda + \sigma_4 = 0,
\end{equation}

\smallskip
where $\sigma_1 = Tr(J_{(F_1, F_2, F_3, G_1)} )$ is the sum of all first-order diagonal minors of $J_{(F_1, F_2, F_3, G_1)} $, \textit{i.e.}, the trace of $J_{(F_1, F_2, F_3, G_1)} $, $\sigma_2$ represents the sum of all second-order diagonal minors of $J_{(F_1, F_2, F_3, G_1)} $ and $\sigma_3$ represents the sum of all third-order diagonal minors of $J_{(F_1, F_2, F_3, G_1)} $. It appears that $\sigma_4 = |J_{(F_1, F_2, F_3, G_1)} |=0$ since one row of the Jacobian matrix (\ref{eq42}) is null. So, the Cayley-Hamilton eigenpolynomial reduces to:

\begin{equation}
\label{eq44}
\lambda\left( \lambda^3 - \sigma_1 \lambda^2 + \sigma_2 \lambda - \sigma_3 \right) = 0.
\end{equation}

\smallskip

But, according to Wechselberger \cite{Wechselberger2012}, $\sigma_3$ vanishes on the \textit{pseudo singular manifold}. Let's prove it:

\begin{proof}

The sum of all third-order diagonal minors of $J$ reads:

\[
\sigma_3 =
\begin{vmatrix}
-f_2 \dfrac{\partial^2 g_1}{\partial x_2 \partial y_1} \quad  & \quad -f_2 \dfrac{\partial^2 g_1}{\partial x_3 \partial y_1} \quad  &  \quad - f_2 \dfrac{\partial^2 g_1}{\partial y_1^2} \vspace{6pt} \\
-f_3 \dfrac{\partial^2 g_1}{\partial x_2 \partial y_1} \quad  & \quad -f_3 \dfrac{\partial^2 g_1}{\partial x_3 \partial y_1} \quad  &  \quad - f_3 \dfrac{\partial^2 g_1}{\partial y_1^2} \vspace{6pt} \\
a_{42} \quad  &  \quad a_{43} \quad & \quad a_{44} \vspace{6pt}
\end{vmatrix}
\]

\smallskip

Then, while using a Laplace's expansion to compute this determinant, it's easy to show that it vanishes.

\end{proof}

So, the Cayley-Hamilton eigenpolynomial (\ref{eq44}) is thus reduced to

\begin{equation}
\label{eq45}
\lambda^2\left( \lambda^2 - \sigma_1 \lambda + \sigma_2 \right) = 0
\end{equation}

\smallskip

Let $\lambda_i$ be the eigenvalues of the eigenpolynomial (\ref{eq45}) and let's denote by $\lambda_{3,4} = 0$ the obvious double root of this polynomial. We have:

\begin{equation}
\label{eq46}
\begin{aligned}
\sigma_1 = & Tr(J_{(F_1, F_2, F_3, G_1)} ) = \lambda_1 + \lambda_2 = - \dfrac{\partial g_1}{\partial x_1}\dfrac{\partial f_1}{\partial y_1}, \hfill  \\
\sigma_2 = & \sum_{i=1}^3 \left| J_{(F_1, F_2, F_3, G_1)}^{ii} \right| =  \lambda_1\lambda_2 \hfill \\
 = & 2 f_2 f_3 \left( \dfrac{\partial^2 g_1}{\partial x_2 \partial x_3 } \dfrac{\partial^2 g_1}{\partial y_1^2} - \dfrac{\partial^2 g_1}{\partial x_2 \partial y_1}\dfrac{\partial^2 g_1}{\partial x_3 \partial y_1} \right) \hfill \\
& + f_2^2 \left( \dfrac{\partial^2 g_1}{\partial x_2^2 } \dfrac{\partial^2 g_1}{\partial y_1^2} - (\dfrac{\partial^2 g_1}{\partial x_2 \partial y_1})^2 \right) \hfill \\
& +  f_2 \dfrac{\partial g_1}{\partial x_1} \left( \dfrac{\partial^2 g_1}{\partial y_1^2 }\dfrac{\partial f_1}{\partial x_2} - \dfrac{\partial^2 g_1}{\partial x_2 \partial y_1 } \dfrac{\partial f_1}{\partial y_1} \right) \hfill \\
& +  f_3^2 \left( \dfrac{\partial^2 g_1}{\partial x_3^2 } \dfrac{\partial^2 g_1}{\partial y_1^2} - (\dfrac{\partial^2 g_1}{\partial x_3 \partial y_1})^2 \right) \hfill \\
& + f_3 \dfrac{\partial g_1}{\partial x_1} \left( \dfrac{\partial^2 g_1}{\partial y_1^2 }\dfrac{\partial f_1}{\partial x_3} - \dfrac{\partial^2 g_1}{\partial x_3 \partial y_1 } \dfrac{\partial f_1}{\partial y_1} \right), \hfill \\
\end{aligned}
\end{equation}

\smallskip

where $\sigma_1 = Tr(J_{(F_1, F_2, F_3, G_1)} ) = p$ is is the sum of all first-order diagonal minors of $J_{(F_1, F_2, F_3, G_1)}$, \textit{i.e.} the trace of the Jacobian matrix $J_{(F_1, F_2, F_3, G_1)} $ and $\sigma_2 = \sum_{i=1}^3 \left| J_{(F_1, F_2, F_3, G_1)}^{ii} \right| = q$ represents the sum of all second-order diagonal minors of $J_{(F_1, F_2, F_3, G_1)} $.

Thus, the \textit{pseudo singular manifold} is of saddle-type \textit{iff} the following conditions $C_1$ and $C_2$ are verified:

\begin{equation}
\label{eq47}
\begin{aligned}
C_1:& \quad \Delta = p^2 - 4q > 0, \hfill \\
C_2:& \quad q < 0.
\end{aligned}
\end{equation}

\smallskip

Condition $C_1$ is systematically satisfied provided that condition $C_2$ is verified. Thus, the \textit{pseudo singular manifold} is of saddle-type \textit{iff} $q < 0$. But, as recalled previously, one coordinate is undetermined, say $x_2$ without loss of generality. So, the eigenvalues (\ref{eq46}) of the characteristic polynomial are also functions of the variable $x_2$ and of the parameters of system (\ref{eq30}). Now, let suppose that one parameter, say without loss of generality $\alpha_2$ (see Sec. 6), modifies the nature of the \textit{pseudo singular manifold}. Condition $C_2$, \textit{i.e.} $q < 0$ is then represented in the space ($x_2, \alpha_2$) by a straight line defining a region within which the \textit{pseudo singular points} are of saddle type. In other words, it means that by choosing a value of the coordinate $x_2$ inside this region ensures that the \textit{pseudo singular point} would be of saddle type.

\subsection{Canard existence in $\mathbb{R}^4$}

In a paper entitled ``A propos de canards'' Wechselberger \cite{Wechselberger2012} stated, while using a standard polynomial change of variables, that any $n$-dimensional singularly perturbed systems with $k$ \textit{slow} variables ($k \geqslant 2$) and $m$ \textit{fast} ($m \geqslant 1$) (\ref{eq1}) can be transformed into the following ``normal form''  (see Appendix B):

\begin{equation}
\label{eq48}
\begin{aligned}
 \dot{x}_1 = &   \tilde{a} x_2 + \tilde{b} y_1 + O \left( x_1, x_2^2, x_2 y_1, y_1^2 \right) + \varepsilon O \left( x_1,  x_2 , x_k, y_1 \right), \hfill \\
 \dot{x}_2 = & 1 + O \left( x_1, x_2, y_1, \varepsilon \right), \hfill \\
 \dot{x}_j = & \tilde{c}_j + O \left( x_1, x_2, y_1, \varepsilon \right), \quad j = 3, \dots, k \hfill \\
 \varepsilon \dot{y}_1 = & x_1 + y_1^2 + x_1 y_1 O \left(x_2, \ldots, x_k  \right) + y_1^2 O \left(x_1, y_1  \right) + \varepsilon O \left( x_1, x_2, y_1, \varepsilon \right) \hfill
\end{aligned}
\end{equation}

which is a generalization of system (\ref{eq28}). We will establish in Appendix B for any four-dimensional singularly perturbed systems (\ref{eq30}) with $k=3$ \textit{slow} variables and $m=1$ \textit{fast} variable that

\[
\begin{aligned}
\tilde{a} & = \frac{1}{2} f_2^2 \left( \dfrac{\partial^2 g_1}{\partial x_2^2 } \dfrac{\partial^2 g_1}{\partial y_1^2} - (\dfrac{\partial^2 g_1}{\partial x_2 \partial y_1})^2 \right) + \frac{1}{2} f_2 \dfrac{\partial g_1}{\partial x_1} \left( \dfrac{\partial^2 g_1}{\partial y_1^2 }\dfrac{\partial f_1}{\partial x_2} - \dfrac{\partial^2 g_1}{\partial x_2 \partial y_1 } \dfrac{\partial f_1}{\partial y_1} \right) \hfill \\
& + \frac{1}{2} f_3^2 \left( \dfrac{\partial^2 g_1}{\partial x_3^2 } \dfrac{\partial^2 g_1}{\partial y_1^2} - (\dfrac{\partial^2 g_1}{\partial x_3 \partial y_1})^2 \right) + \frac{1}{2} f_3 \dfrac{\partial g_1}{\partial x_1} \left( \dfrac{\partial^2 g_1}{\partial y_1^2 }\dfrac{\partial f_1}{\partial x_3} - \dfrac{\partial^2 g_1}{\partial x_3 \partial y_1 } \dfrac{\partial f_1}{\partial y_1} \right) \hfill \\
& + f_2 f_3 \left( \dfrac{\partial^2 g_1}{\partial x_2 \partial x_3 } \dfrac{\partial^2 g_1}{\partial y_1^2} - \dfrac{\partial^2 g_1}{\partial x_2 \partial y_1}\dfrac{\partial^2 g_1}{\partial x_3 \partial y_1} \right), \hfill \\
\tilde{b} & = - \dfrac{\partial g_1}{\partial x_1}\dfrac{\partial f_1}{\partial y_1}, \hfill  \\
\end{aligned}
\]

\begin{remark}
Let's notice that by posing $f_3=0$ in $\tilde{a}$ we find again $a$ given in Sec. 3.7.
\end{remark}

Thus, in his article entitled ``A propos de canards'' Wechselberger \cite[p. 3304]{Wechselberger2012} has provided in the framework of ``standard analysis'' a generalization of Beno\^{i}t's theorem \cite{Benoit1983} (see Beno\^{i}t's theorem above) for any $n$-dimensional singularly perturbed systems with $k$ \textit{slow} variables ($k \geqslant 2$) and $m$ \textit{fast} ($m \geqslant 1$). According to his Theorem 4.1(b) presented below he proved the existence of canard solutions for the original system (\ref{eq1}).

\begin{theo}\hfill \\
\label{theo2}
In the folded saddle case of system {\rm (48)} singular canards perturb to maximal canards solutions for sufficiently small $\varepsilon \ll 1$.
\end{theo}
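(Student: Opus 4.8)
The plan is to carry out the geometric blow-up argument of Szmolyan--Wechselberger and Wechselberger for the normal form (48), treating the extra slow coordinates $x_3,\dots,x_k$ as inert center directions. First I would fix the singular-limit geometry: setting $\varepsilon=0$ in (48), the critical manifold $M_0$ is the graph $x_1=-y_1^{2}+\dots$ over $(x_2,\dots,x_k,y_1)$, with fold set $\{x_1=0,\ y_1=0\}$. Since the layer equation $y_1'=x_1+y_1^{2}+\dots$ has $\partial_{y_1}g_1=2y_1+\dots$, the branch $y_1<0$ is attracting ($M_{a,0}$) and the branch $y_1>0$ is repelling ($M_{r,0}$). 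Away from the fold both branches are normally hyperbolic, so Fenichel's theorem provides locally invariant manifolds $M_{a,\varepsilon}$ and $M_{r,\varepsilon}$ that are $O(\varepsilon)$-close to $M_{a,0}$ and $M_{r,0}$ outside any fixed neighbourhood of the fold.

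Next I would identify the singular canard. Desingularizing the constrained flow on $M_0$ by the rescaling $t=-\partial_{y_1}g_1\,\tau$ gives the normalized slow dynamics (36), whose linearization along the pseudo singular manifold, under the hypotheses (40)--(41), has the characteristic polynomial (45), namely $\lambda^{2}\bigl(\lambda^{2}-\sigma_1\lambda+\sigma_2\bigr)=0$, with one zero eigenvalue produced by the time rescaling and one by the free coordinate on the pseudo singular manifold. By (46), $\sigma_2=q=2\tilde a$, so the \emph{folded saddle} hypothesis is exactly $\sigma_2<0$, i.e. $\tilde a<0$; then $\lambda_1\lambda_2=2\tilde a<0$ and the two nonzero eigenvalues are real of opposite sign. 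Undoing the time rescaling, the separatrices of this saddle become two distinguished trajectories of the reduced flow through the folded singularity, one of which crosses from $M_{a,0}$ to $M_{r,0}$: this is the singular maximal canard $\gamma_0$. Since $\dot x_j=\tilde c_j+\dots$, the coordinates $x_j$ merely drift along $\gamma_0$, and letting the free coordinate on the pseudo singular manifold vary sweeps out a one-parameter family of singular canards.

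Then I would blow up the folded singularity. Fixing the free coordinate and translating the corresponding folded singularity to the origin, I would use the weighted cylindrical blow-up
\[
x_1=r^{2}\bar{x}_1,\qquad y_1=r\,\bar{y}_1,\qquad \varepsilon=r^{2}\bar{\varepsilon},
\]
with $(\bar{x}_1,\bar{y}_1,\bar{\varepsilon})$ on the hemisphere $\bar{x}_1^{2}+\bar{y}_1^{2}+\bar{\varepsilon}^{2}=1$, $\bar{\varepsilon}\ge 0$, and $(x_2,\dots,x_k)$ left untouched. Because the fast variable is one-dimensional this coincides, in the $(x_1,y_1,\varepsilon)$-directions, with the canard-point blow-up underlying Theorem \ref{theo1}, the $x_j$-directions being carried passively along the cylinder. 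In the directional charts the blown-up vector field is normally hyperbolic along the blown-up critical manifold, so $M_{a,\varepsilon}$ and $M_{r,\varepsilon}$ extend smoothly up to the equator of the hemisphere; in the rescaling chart $\{\bar{\varepsilon}=1\}$ one obtains a Riccati-type inner layer equation which, because $\tilde a<0$, has a \emph{hyperbolic} (saddle) equilibrium, together with trivial drift in the $x_j$.

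Finally I would match the pieces: I would continue $M_{a,\varepsilon}$ through the directional chart on the attracting side into the rescaling chart and $M_{r,\varepsilon}$ backwards through the chart on the repelling side into it; in the saddle case these are precisely the stable and unstable manifolds of the hyperbolic equilibrium of the inner equation, so a computation showing that the associated splitting function has a simple zero yields a transverse intersection $M_{a,\varepsilon}\cap M_{r,\varepsilon}$. Blowing down, this intersection is a maximal canard that converges to $\gamma_0$ as $\varepsilon\to 0$; varying the free coordinate gives the whole family, and, using the non-uniqueness of $M_{a,\varepsilon}$ and $M_{r,\varepsilon}$, every solution that follows $M_{a,\varepsilon}$ and then $M_{r,\varepsilon}$ is a canard, which is the claim. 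I expect the matching step to be the main obstacle: one must push the slow manifolds through the rescaling chart and establish the transverse intersection in the presence of the extra slow/center coordinates $x_2,\dots,x_k$, which requires an exchange-lemma estimate to control the loss of hyperbolic contraction along the blown-up branches and a check that the drift terms $\tilde c_j$ do not destroy the simple zero of the now multi-parameter splitting function. In contrast to the folded node case, no arithmetic non-resonance condition on the eigenvalue ratio is needed, so once the matching is established the conclusion follows from the classical canard-point analysis applied fibrewise over the pseudo singular manifold.
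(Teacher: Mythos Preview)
Your proposal is a technically reasonable sketch of the geometric blow-up argument, but you should be aware that the paper does not prove this theorem at all: its entire proof is the single line ``See Wechselberger \cite{Wechselberger2012}.'' Theorem~\ref{theo2} is quoted from the literature as an input to the paper's method, not established within it.

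What you have written is, in outline, exactly the content of that cited reference (and of Szmolyan--Wechselberger \cite{SzmolyanWechselberger2001} in the $k=2$ case): Fenichel extension of $M_{a,0}$ and $M_{r,0}$ away from the fold, identification of the singular canard as a saddle separatrix of the desingularized reduced flow, weighted cylindrical blow-up of the folded singularity in the $(x_1,y_1,\varepsilon)$ variables with the remaining slow coordinates treated as center directions, analysis of the inner Riccati layer in the rescaling chart, and matching of the extended slow manifolds to produce a transverse intersection. Your identification of the eigenvalue structure via $\sigma_2=2\tilde a$ and the role of the extra slow directions as a $(k-2)$-parameter family of folded saddles is also consistent with the paper's Section~4.6. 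So your sketch is not a different route; it is a summary of the very argument the paper is citing. If you intend to submit this as a self-contained proof rather than a citation, the step you correctly flag as the obstacle---pushing $M_{a,\varepsilon}$ and $M_{r,\varepsilon}$ through the rescaling chart and verifying the transverse intersection uniformly in the center parameters---is precisely where the real work lies, and it would need the detailed estimates from \cite{Wechselberger2012} rather than the heuristic ``splitting function has a simple zero'' to be complete.
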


\begin{proof}
See Wechselberger \cite{Wechselberger2012}.
\end{proof}

As previously recalled, the method presented in this paper doesn't use the ``desingularized vector field'' (\ref{eq37}) but the ``normalized slow dynamics'' (\ref{eq36}). So, we have the following proposition:

\smallskip

\begin{prop}\hfill \\
\label{prop2}
If the normalized slow dynamics {\rm (36)} has a \textit{pseudo singular point} of saddle type, \textit{i.e.} if the sum $\sigma_2$ of all second-order diagonal minors of the Jacobian matrix of the normalized slow dynamics {\rm (36)} evaluated at a \textit{pseudo singular point} is negative, \textit{i.e.} if $\sigma_2 < 0$ then, according to Theorem 2, system {\rm (30)} exhibits a canard solution which evolves from the attractive part of the slow manifold towards its repelling part.

\end{prop}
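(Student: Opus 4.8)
The plan is to mirror the structure of the proof of Proposition~\ref{prop1}, namely to show that the saddle--type condition extracted from the \emph{normalized slow dynamics} \eqref{eq36} is equivalent to the condition $\tilde{a}<0$ appearing in Wechselberger's normal form \eqref{eq48}, so that Theorem~\ref{theo2} applies verbatim. The key identity to establish is that the second symmetric function $\sigma_2$ of the eigenvalues of the full $4\times4$ Jacobian $J_{(F_1,F_2,F_3,G_1)}$, evaluated on the \emph{pseudo singular manifold} $(0,x_2,0,0)$, coincides up to a factor of $2$ with $\tilde{a}$, exactly as $\sigma_2=2a$ in the three--dimensional case \eqref{eq29}. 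Since $\sigma_4=0$ (a null row) and $\sigma_3=0$ (just proved via the Laplace expansion), the characteristic polynomial reduces to $\lambda^2(\lambda^2-\sigma_1\lambda+\sigma_2)=0$, so the only nontrivial spectral data are $\lambda_1+\lambda_2=\sigma_1$ and $\lambda_1\lambda_2=\sigma_2$; the \emph{pseudo singular manifold} is of saddle type iff the two nonzero eigenvalues are real of opposite sign, i.e. iff $\sigma_2<0$ (condition $C_2$), condition $C_1$ being automatic as noted after \eqref{eq47}.

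The steps I would carry out, in order, are: first, recall from \eqref{eq46} the explicit expression of $\sigma_2=\sum_{i=1}^{3}|J^{ii}_{(F_1,F_2,F_3,G_1)}|$ as the sum of the three $3\times3$ principal minors obtained by deleting the null first row/column and one of the remaining indices; second, recall from the computation announced in Appendix~B the explicit expression of $\tilde{a}$; third, compare the two expressions term by term and observe that each of the five ``diagonal'' blocks ($f_2^2(\cdots)$, $f_2\,\partial g_1/\partial x_1(\cdots)$, $f_3^2(\cdots)$, $f_3\,\partial g_1/\partial x_1(\cdots)$, $2f_2f_3(\cdots)$) in $\sigma_2$ appears with exactly half the coefficient in $\tilde{a}$, giving $\sigma_2=2\tilde{a}$; fourth, by Remark~3 (setting $f_3=0$) check consistency with the three--dimensional identity $\sigma_2=2a$, $\sigma_1=-b$, $T=-b$, $D=2a$ of \eqref{eq29}, which also confirms the sign conventions; fifth, conclude that $\sigma_2<0\iff\tilde{a}<0\iff$ the folded singularity is of saddle type (Szmolyan--Wechselberger's classification quoted in the introduction), so that system~\eqref{eq30} satisfies the hypotheses of Theorem~\ref{theo2} and hence exhibits a canard connecting the attracting to the repelling branch of the slow manifold.

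The main obstacle is the bookkeeping in step three: unlike the three--dimensional case, where the Jacobian $J_{(F_2,G_1)}$ of the desingularized field is literally the lower--right $2\times2$ block of $J_{(F_1,F_2,G_1)}$ and the identity is immediate, here one must verify that the sum of \emph{three} $3\times3$ principal minors of the $4\times4$ matrix \eqref{eq42} reproduces the six--term quadratic form $2\tilde{a}$ without any spurious contribution from the mixed terms $a_{23}a_{32}$, $a_{24}a_{42}$, $a_{34}a_{43}$. The crucial simplification is that the second and third rows of \eqref{eq42} are proportional (row~2 is $-f_2\,\nabla(\partial g_1/\partial\,\cdot\,,\partial y_1)$ and row~3 is $-f_3$ times the same gradient), so in any $3\times3$ principal minor containing both of those rows the corresponding $2\times2$ sub--block is rank one and the expansion collapses; this is the same rank--one mechanism that forced $\sigma_3=0$. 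One must also handle carefully which $x_1$--derivatives survive under the generic hypotheses \eqref{eq40}--\eqref{eq41} (only $\partial g_1/\partial x_1$ and $\partial^2 g_1/\partial x_i\partial y_1$, $\partial^2 g_1/\partial y_1^2$ are nonzero at $(0,x_2,0,0)$, while $f_1=g_1=\partial g_1/\partial x_2=\partial g_1/\partial x_3=\partial g_1/\partial y_1=0$ there), which is exactly what kills the $f_1$-- and $g_1$--proportional pieces of the $a_{4i}$ and leaves the stated form of $\tilde{a}$. Once this linear--algebra reduction is done, the rest is the purely formal invocation of Theorem~\ref{theo2}, precisely as in the three--dimensional proof.

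\begin{proof}
Since the first row of the Jacobian \eqref{eq42} vanishes identically on the \emph{pseudo singular manifold}, $\sigma_4=|J_{(F_1,F_2,F_3,G_1)}|=0$, and by the Laplace expansion carried out above $\sigma_3=0$ as well, so the characteristic polynomial factors as in \eqref{eq45}, $\lambda^2(\lambda^2-\sigma_1\lambda+\sigma_2)=0$. Hence the only nonzero eigenvalues $\lambda_1,\lambda_2$ satisfy $\lambda_1+\lambda_2=\sigma_1$ and $\lambda_1\lambda_2=\sigma_2$, and by \eqref{eq47} the \emph{pseudo singular manifold} is of saddle type if and only if $\sigma_2<0$.

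It remains to compare $\sigma_2$ with the coefficient $\tilde{a}$ of the normal form \eqref{eq48}. In the matrix \eqref{eq42} the second and third rows are $-f_2$ and $-f_3$ times the common vector $\bigl(\partial^2 g_1/\partial x_1\partial y_1,\ \partial^2 g_1/\partial x_2\partial y_1,\ \partial^2 g_1/\partial x_3\partial y_1,\ \partial^2 g_1/\partial y_1^2\bigr)$ evaluated at $(0,x_2,0,0)$; consequently every $2\times2$ sub--block formed from rows $2$ and $3$ is of rank one and contributes zero to any principal minor containing both rows. Writing out the three $3\times3$ principal minors $|J^{11}|,|J^{22}|,|J^{33}|$ obtained by deleting the null first row and column together with one of the indices $3$, $2$, $\{2,3\}$—using the generic hypotheses \eqref{eq40}--\eqref{eq41}, which make $f_1$, $g_1$, $\partial g_1/\partial x_2$, $\partial g_1/\partial x_3$, $\partial g_1/\partial y_1$ all vanish at $(0,x_2,0,0)$ so that $a_{4i}=(\partial g_1/\partial x_1)(\partial f_1/\partial x_i)+f_2\,\partial^2 g_1/\partial x_2\partial x_i+f_3\,\partial^2 g_1/\partial x_3\partial x_i$ for $i=1,2,3$ and $a_{44}=(\partial g_1/\partial x_1)(\partial f_1/\partial y_1)$—one obtains precisely the six--term expression displayed in \eqref{eq46}. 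Comparing this with the formula for $\tilde{a}$ established in Appendix~B, every block of $\sigma_2$ carries exactly twice the coefficient of the corresponding block of $\tilde{a}$, so that
\begin{equation}
\label{eq49}
\sigma_1=Tr\bigl(J_{(F_1, F_2, F_3, G_1)}\bigr)=\lambda_1+\lambda_2=-\tilde{b},\qquad
\sigma_2=\sum_{i=1}^{3}\bigl|J_{(F_1, F_2, F_3, G_1)}^{ii}\bigr|=\lambda_1\lambda_2=2\tilde{a}.
\end{equation}
(Setting $f_3=0$ recovers $\sigma_1=-b$, $\sigma_2=2a$ of \eqref{eq29}, in agreement with Remark~3.)

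Therefore the condition $\sigma_2<0$ for the \emph{pseudo singular manifold} to be of saddle type is identical to the condition $\tilde{a}<0$ of Szmolyan and Wechselberger, i.e.\ to the folded--saddle condition of system \eqref{eq48}. By Theorem~\ref{theo2}, in the folded saddle case the singular canards of \eqref{eq48}, and hence of the equivalent system \eqref{eq30}, perturb to maximal canards for $\varepsilon$ sufficiently small; that is, system \eqref{eq30} exhibits a canard solution that evolves from the attractive part of the slow manifold towards its repelling part. This proves the proposition.
\end{proof}
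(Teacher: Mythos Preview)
Your proof is correct and follows essentially the same approach as the paper's: compare the explicit expression \eqref{eq46} for $\sigma_2$ with the Appendix~B formula for $\tilde{a}$ to obtain $\sigma_1=-\tilde{b}$, $\sigma_2=2\tilde{a}$, and then invoke Theorem~\ref{theo2}. One terminological slip worth fixing: $\sigma_2$ is the sum of the \emph{second-order} (i.e.\ $2\times2$) principal minors of the $4\times4$ Jacobian, not the $3\times3$ ones; because the first row vanishes, only the three $2\times2$ principal minors of the lower-right $3\times3$ block survive, which is what you actually compute and compare with $\tilde{a}$.
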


\begin{proof}

By making some smooth changes of time and smooth changes of coordinates (see Appendix B) we brought the system (\ref{eq30}) to the following ``normal form'':

\[
\begin{aligned}
 \dot{x}_1 & =  \tilde{a} x_2 + \tilde{b} y_1 + O \left( x_1, \varepsilon, x_2^2, x_2 y_1, y_1^2 \right), \hfill \\
 \dot{x}_2 & = 1 + O \left( x_1, x_2, y_1, \varepsilon \right), \hfill \\
 \dot{x}_3 & = 1 + O \left( x_1, x_2, y_1, \varepsilon \right), \hfill \\
 \varepsilon \dot{y}_1 & = x_1 + y_1^2 + O \left( \varepsilon x_1, \varepsilon x_2, \varepsilon y_1, \varepsilon^2, x_1^2 y_1, y_1^3, x_1 x_2 y_1  \right), \hfill
\end{aligned}
\]

\smallskip

Then, we deduce that the condition for the \textit{pseudo singular point} to be of saddle type is $ \tilde{a} < 0$. According to Eqs. (\ref{eq46}) it is easy to verify that

\[
\begin{aligned}
\sigma_1 & = Tr(J_{(F_1, F_2, F_3, G_1)}) = \lambda_1 + \lambda_2 = -\tilde{b}, \hfill  \\
\sigma_2 & = \sum_{i=1}^3 \left| J_{(F_1, F_2, F_3, G_1)}^{ii} \right| = \lambda_1\lambda_2 = 2\tilde{a}.
\end{aligned}
\]

\smallskip

So, the condition for which the \textit{pseudo singular point} is of saddle type, \textit{i.e.} $\sigma_2 < 0$ is identical to that proposed by Wechselberger \cite[p. 3298]{Wechselberger2012} in his theorem, \textit{i.e.} $\tilde{a} < 0$.
\end{proof}

So, Prop. 2 can be used to state the existence of canard solution for such systems. Application of Proposition 2 to the four-dimensional memristor canonical Chua's circuits, presented in Sec. 6, will enable to prove the existence of generic ``canards solutions'' in such Memristor Based Chaotic Circuits.

\section{Third-order Memristor-Based canonical oscillator}
\label{Sec5}

Let's consider the Memristor-Based canonical Chua's circuit \cite{ItohChua2008,ItohChua2013} containing five circuits elements: two passive capacitors, one passive inductor, one negative resistor, and one active Chua's flux controlled memristor (see Fig. 2).

\begin{figure}[htbp]
\centerline{\includegraphics[width = 8.5cm,height = 4.25cm]{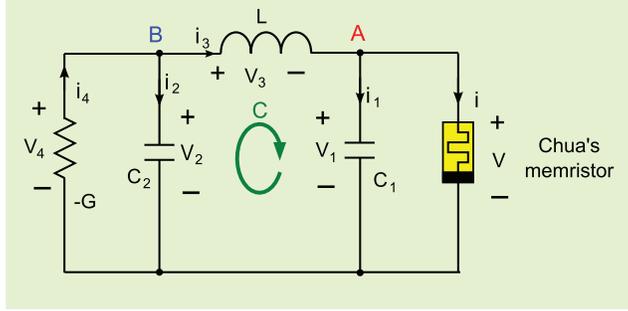}}
\caption{Memristor-Based canonical Chua's circuit \cite{ItohChua2013}.}
\label{Fig2}
\end{figure}

The parameter values used by Itoh and Chua \cite[p. 1330001-14]{ItohChua2013} \textit{i.e.} are:

\[
C_1 = \dfrac{1}{10}\mbox{, } C_2 = \dfrac{1}{0.47}\mbox{, } G = L = 1\mbox{, } a = -2.0\mbox{, } b = 4.0.
\]

\subsection{Flux-linkage and charge phase space}

Applying Kirchhoff's circuit laws to the nodes $A$, $B$ and the loop $C$ of the circuit Fig. 2, Itoh and Chua \cite{ItohChua2008,ItohChua2013} obtained the following set of differential equations, \textit{i.e.}, the following \textit{memristor based chaotic circuit}:

\begin{equation}
\label{eq49}
\begin{aligned}
C_1 \dfrac{d\varphi_1}{dt} & = q_3 - k\left( \varphi_1 \right), \hfill \\
C_2 \dfrac{d\varphi_2}{dt} & = - q_3 + G \varphi_2, \hfill \\
 L \dfrac{dq_3}{dt} & = \varphi_2 - \varphi_1.
\end{aligned}
\end{equation}

where the $\varphi-q$ characteristic curve of the Chua's memristor is given by the following piecewise-linear function:

\begin{equation}
\label{eq50}
q = k\left( \varphi \right) = b \varphi + \frac{a -b}{2} \left( \left| \varphi + 1 \right| - \left| \varphi - 1 \right| \right)
\end{equation}

\vspace{0.1in}

By setting $x= \varphi_1$, $y = q_3$, $z = \varphi_2$, $\varepsilon = C_1$, $\beta = \dfrac{1}{C_2}$, $\gamma = \dfrac{G}{C_2}$ and $L =1$ the \textit{memristor based chaotic circuit} (\ref{eq50}) can be written:

\begin{equation}
\label{eq51}
\begin{aligned}
\dfrac{dx}{dt} & = \dfrac{1}{\varepsilon} \left[ y - k\left( x \right) \right], \hfill \\
\dfrac{dy}{dt} & = z - x, \hfill \\
\dfrac{dz}{dt} & = - \beta y + \gamma z.
\end{aligned}
\end{equation}

\vspace{0.1in}

Following the works of Tsuneda \cite{Tsuneda2005}, let's replace the $\varphi-q$ characteristic curve of the Chua's memristor $q( \varphi )$ which is given by the piecewise-linear function (\ref{eq51}) by a smooth cubic nonlinear function $\hat{k}( \varphi ) = c_1 \varphi^3 + c_2 \varphi$ for which the parameters $c_1$ and $c_2$ are determined while using the \textit{least squares method}. The square error between $k( \varphi )$ and $\hat{k}( \varphi )$ is defined by:

\begin{equation}
\label{eq52}
S = \int_{-d}^{d} \left[ k( \varphi ) - \hat{k}( \varphi ) \right]^2 \mathrm{d}\varphi
\end{equation}

\vspace{0.1in}

where $\left[-d, d \right]$ is an interval for approximation. Let's note that in our case $d$ is considered as a parameter such that $|d| > 1$. Solving $\partial S / \partial c_1 = 0$ and $\partial S / \partial c_2 = 0$, we find

\begin{equation}
\label{eq53}
\begin{aligned}
c_1 & = -\frac{35 (a-b) \left(-1+d^2\right)^2}{16 d^7}, \hfill\\
c_2 & = \frac{(a-b) \left(21-50 d^2+45 d^4\right)}{16 d^5} + b.
\end{aligned}
\end{equation}

\subsection{Piecewise-linear and cubic nonlinearity}

While still using the same parameter values as Itoh and Chua \cite[p. 1330001-14]{ItohChua2013} \textit{i.e.}

\[
C_1 = \dfrac{1}{10}\mbox{, } C_2 = \dfrac{1}{0.47}\mbox{, } G = L = 1\mbox{, } a = -2.0\mbox{, } b = 4.0,
\]

the coefficients $c_1$ and $c_2$ have been chosen such that the extrema of both piecewise-linear and cubic nonlinearity characteristic curves substantially coincides as exemplified on Fig. 3. This condition is realized for $d=3$ and

\[
c_1 = \frac{280}{729} \quad \mbox{ ; } \quad c_2 = -\frac{26}{27}.
\]

\begin{figure}[htbp]
\centerline{\includegraphics[width = 11cm,height = 11cm]{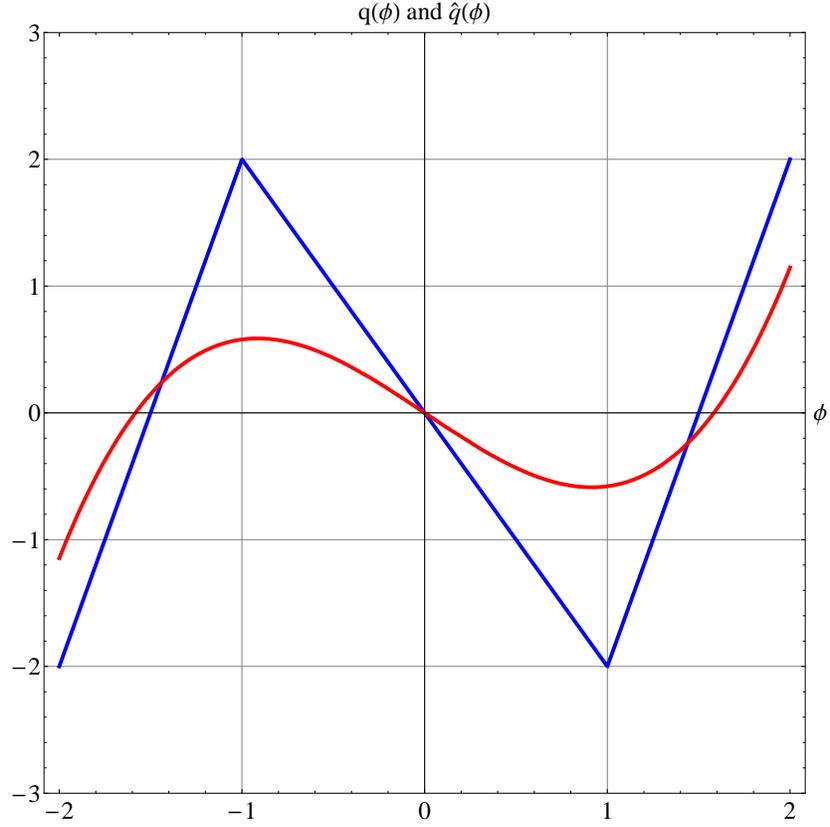}}
\caption{Piecewise-linear and cubic $\varphi-q$ characteristic curves for parameter values: $a = -2$, $b = 4$ and $d=3$.}
\label{fig3}
\end{figure}

So, let's consider the \textit{memristor based chaotic circuit} (\ref{eq51}):

\begin{equation}
\label{eq54}
\begin{aligned}
\dfrac{dx}{dt} & = \dfrac{1}{\varepsilon} \left[ y - k\left( x \right) \right], \hfill \\
\dfrac{dy}{dt} & = z - x, \hfill \\
\dfrac{dz}{dt} & = - \beta y + \gamma z,
\end{aligned}
\end{equation}

\vspace{0.1in}

and let's replace the piecewise-linear characteristic curves $k(x)$ by the cubic $\hat{k}(x) = c_1 x^3 + c_2 x$. First, let's  notice that both \textit{chaotic attractors} given respectively by Eqs. (\ref{eq51}) \& Eqs. (\ref{eq54}) are quite \textit{similar} as highlighted on Fig. 4.

\begin{figure}[htbp]
\centerline{\includegraphics[width = 11cm,height = 11cm]{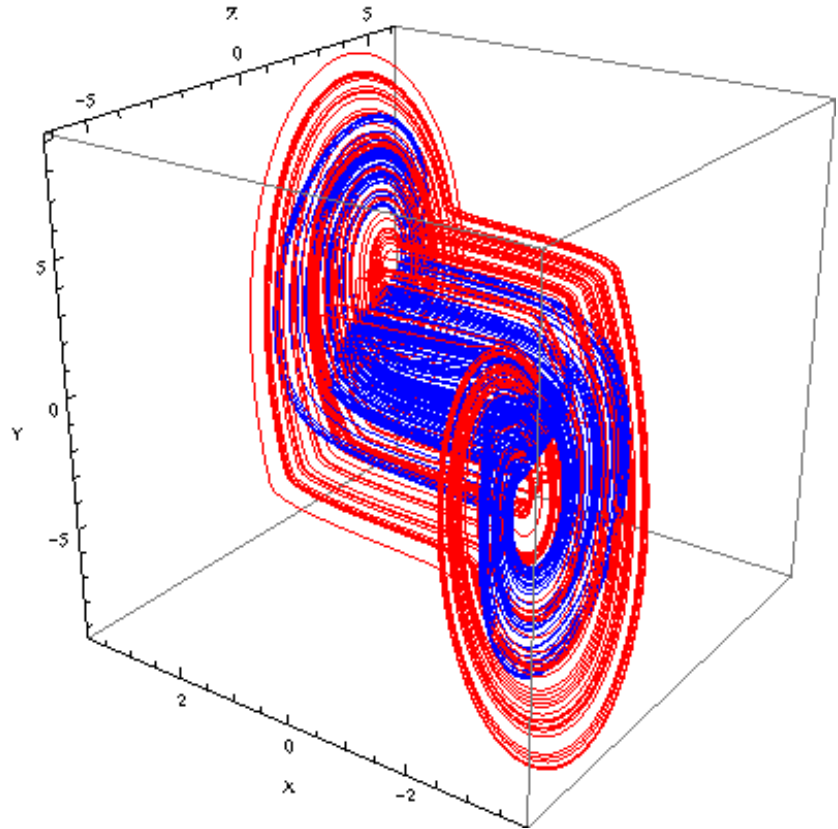}}
\vspace{0.1in}
\caption{\textit{Memristor-Based canonical Chua's circuits} with piecewise linear (Eqs. (\ref{eq51}) in red) and cubic (Eqs. (\ref{eq55}) in blue) functions for parameter values: $\varepsilon = 1/10$, $\beta = \gamma = 0.47$, $a = -2$, $b = 4$ and $d=3$.}
\label{fig4}
\end{figure}

Now, let's make the following variable changes in Eqs. (\ref{eq54}) in order to apply the method presented in Sec. 3:

\[
x \rightarrow z, \quad y \rightarrow -x, \quad z \rightarrow y.
\]

Thus, we have:

\begin{equation}
\label{eq55}
\begin{aligned}
\dfrac{dx}{dt} & = z - y, \hfill \\
\dfrac{dy}{dt} & = \beta x + \gamma y, \hfill \\
\dfrac{dz}{dt} & = \dfrac{1}{\varepsilon} \left[- x - k\left( z \right) \right].
\end{aligned}
\end{equation}

\vspace{0.1in}

This last transformation will enable to compare the condition (given below) for the existence of canard solutions in system (\ref{eq55}) with those given in our previous works entitled ``Canards from Chua's circuits'' \cite{GinouxLLibre2013}.\\

Finally, let's replace the variables ($x,y,z$) by ($x_1, x_2, y_1$) and let's apply the method presented in Sec. 3 to the following system

\begin{equation}
\label{eq56}
\begin{aligned}
\dfrac{dx_1}{dt} & = y_1 - x_2, \hfill \\
\dfrac{dx_2}{dt} & = \beta x_1 + \gamma x_2, \hfill \\
\dfrac{dy_1}{dt} & = \dfrac{1}{\varepsilon} \left[- x_1 - k\left( y_1 \right) \right].
\end{aligned}
\end{equation}

\subsection{Critical manifold and constrained system}

The critical manifold of this system (\ref{eq56}) is given by $-x_1 - k(y_1) = 0$. According to Eq. (\ref{eq15}) the constrained system on the critical manifold reads:

\begin{equation}
\label{eq57}
\begin{aligned}
 \dot{x_1} & = y_1 - x_2, \hfill \\
 \dot{x_2} & = \beta x_1 + \gamma x_2, \hfill \\
 \dot{y_1} & = \dfrac{ y_1 - x_2 }{- \left(c_1 y_1^3 + c_2 y_1 \right)}, \hfill \vspace{6pt} \\
         0 & = - x_1 - \left(c_1 y_1^3 + c_2 y_1 \right).
\end{aligned}
\end{equation}

\subsection{Normalized slow dynamics}

Then, by rescaling the time by setting $t = -  \dfrac{\partial g_1}{\partial y_1} \tau = (3 c_1 y_1^2 + c_2) $ we obtain the ``normalized slow dynamics'':

\begin{equation}
\label{eq58}
\begin{aligned}
 \dot{x}_1 & = \left( y_1 - x_2 \right) \left( 3 c_1 y_1^2 + c_2 \right) = F_1 \left( x_1, x_2, y_1 \right), \hfill \vspace{6pt} \\
 \dot{x}_2 & = \left( \beta x_1 + \gamma x_2 \right) \left( 3 c_1 y_1^2 + c_2 \right) = F_2 \left( x_1, x_2, y_1 \right), \hfill \vspace{6pt} \\
 \dot{y}_1 & = x_2 - y_1 = G_1 \left( x_1, x_2, y_1 \right), \hfill \vspace{6pt} \\
 0 & = - x_1 - \left(c_1 y_1^3 + c_2 y_1 \right). \hfill
\end{aligned}
\end{equation}

\subsection{Pseudo singular points}

According to Eq. (\ref{eq18}), the \textit{pseudo singular points} of system (\ref{eq56}) are:

\begin{equation}
\label{eq59}
\tilde{x}_1 = \pm \dfrac{2c_2}{3} \sqrt{ \dfrac{-c_2}{3c_1}}, \ \tilde{x}_2 = \mp \sqrt{ \dfrac{- c_2}{3c_1}}, \ \tilde{y}_1 = \mp \sqrt{ \dfrac{-c_2}{3c_1}}.
\end{equation}

\smallskip

Let's notice that these \textit{pseudo singular points} are independent of the parameter $\gamma$. The Jacobian matrix of system (\ref{eq58}) evaluated at the \textit{pseudo singular points} reads:

\begin{equation}
\label{eq60}
J_{(F_1, F_2, G_1)} = \begin{pmatrix}
0 \quad &  \quad 0 \quad &  \quad 0 \vspace{6pt} \\
0 \quad &  \quad 0 \quad &  \quad -2 \gamma  c_2 + \frac{4 \beta  c_2^2}{3} \vspace{6pt} \\
0 \quad &  \quad 1 \quad &  \quad -1 \vspace{6pt}
\end{pmatrix}
\end{equation}

\begin{remark}
Although, the \textit{pseudo singular points} have not been shifted at the origin Beno\^{i}t's generic hypotheses (\ref{eq20}-\ref{eq21}) are satisfied.
\end{remark}

\subsection{Canard existence in third-order memristor Chua's circuit}

According to Eqs. (\ref{eq25}) we find that:

\[
\begin{aligned}
p & = \sigma_1 = Tr\left[ J\right] = -1,\\
q & = \sigma_2 = \frac{2}{3} c_2 \left(3 \gamma -2 \beta  c_2\right).
\end{aligned}
\]

Thus, according to Prop. \ref{prop1}, the \textit{pseudo singular points} are of saddle-type if and only if:

\[
\frac{2}{3} c_2 \left(3 \gamma -2 \beta  c_2\right) < 0.
\]

\[
\Delta = p^2 - 4q > 0 \qquad \mbox{ and } \qquad q <0.
\]

So, we have the following conditions $C_1$ and $C_2$:

\begin{equation}
\label{eq61}
\begin{aligned}
C_1:& \quad \Delta = 1 + 4 (- 2 c_2) ( \gamma - \dfrac{2 \beta c_2}{3} ) > 0, \hfill \\
C_2:& \quad q = 2c_2( \gamma - \dfrac{2 \beta c_2}{3} )  < 0.
\end{aligned}
\end{equation}

\smallskip
Since the \textit{pseudo singular points} are independent of the parameter $\gamma$ let's choose $\gamma$ as the ``canard parameter'' or ``duck parameter''. Obviously, it appears that if the condition $C_2$ is verified then the condition $C_1$ is \textit{de facto} satisfied\footnote{Keep in mind that $c_2$ is generally negative so that the characteristic curve admits a negative slope.}. Finally, the \textit{pseudo singular points} are of saddle-type if and only if we have:

\begin{equation}
\label{eq62}
\gamma_{saddle-node} = \dfrac{ 2 \beta c_2}{3} < \gamma.
\end{equation}

where $\gamma_{saddle-node}$ represents the critical value of the parameter $\gamma$ for which one of the two remaining eigenvalues $\lambda_1$ or $\lambda_2$ of the eigenpoynomial associated with the Jacobian matrix (\ref{eq60}) vanishes. With this set of parameters $\varepsilon = 1/10$, $\beta = 0.47$, $a = -2$, $b = 4$, $d=3$, $c_1 = 280/729$, $c_2 = -26/27$

\[
\gamma_{saddle-node} = \dfrac{ 2 \beta c_2}{3} \approx -0.3.
\]

\subsection{Fixed points stability and Routh-Hurwitz' theorem}

However, as pointed out in our previous works entitled ``Canards from Chua's circuits'' \cite{GinouxLLibre2013} the system (\ref{eq56}) admits, except the origin, two fixed points, the stability of which could preclude the existence of ``canards solutions''. So, let's compute the fixed points of system (\ref{eq56}) and analyze their stability. System (\ref{eq56}) admits except the origin the following fixed points:

\begin{equation}
\label{eq63}
x_1^* = \pm \dfrac{\gamma}{\beta} \sqrt{ \dfrac{\gamma - c_2 \beta}{c_1 \beta}}, \quad x_2^* = y_1^* = \mp \sqrt{ \dfrac{\gamma - c_2 \beta}{c_1 \beta}}.
\end{equation}

\smallskip

The \textit{eigenpolynomial} equation of the Jacobian matrix of system (\ref{eq56}) evaluated at these fixed points (\ref{eq63}) reads:

\begin{equation}
\label{eq64}
\varepsilon  \lambda^3  + \lambda^2 (\dfrac{3 \gamma }{\beta } - \gamma  \varepsilon - 2 c_2) + \lambda  (1-\dfrac{3 \gamma^2}{\beta } + \beta \varepsilon + 2 \gamma  c_2 ) + 2 ( \gamma - \beta  c_2 ) = 0
\end{equation}

\smallskip

Let suppose that all the parameters are fixed except $\gamma$, \textit{i.e.} the ``duck parameter''. There are two methods to analyze the stability of fixed points as functions of the ``duck parameter'' value. The first is to solve the above third degree eigenpolynomial equation (\ref{eq64}) with the Cardano's method and the second consists in using the Routh-Hurwitz' theorem \cite{Routh1877,Hurwitz1893}. This latter method enables to easier analyze the stability of the fixed points as functions of a parameter. According to Routh-Hurwitz' theorem, the \textit{eigenpolynomial} equation can be written as:

\[
a_3 \lambda^3 + a_2 \lambda^2 + a_1 \lambda + a_0 = 0.
\]

\smallskip

It states that if $D_1 = a_1$ and $D_2 = a_1a_2 - a_0 a_3$ are both positive then \textit{eigenpolynomial} equation would have eigenvalues with negative real parts. In other words, if $D_1$ and $D_2$ are positive the fixed points will be stable. In the case of the \textit{eigenpolynomial} equation (\ref{eq64}) we have:

\begin{equation}
\label{eq65}
\begin{aligned}
D_1 = & 1-\dfrac{3 \gamma^2}{\beta } + \beta \varepsilon + 2 \gamma  c_2, \hfill \\
D_2 = & - 2 \varepsilon  \left(\gamma -\beta  c_2\right) +\left(\dfrac{3 \gamma }{\beta }- \gamma  \varepsilon - 2 c_2\right) \left(1-\dfrac{3 \gamma ^2}{\beta } + \beta  \varepsilon + 2 \gamma  c_2\right). \hfill \\
\end{aligned}
\end{equation}

\smallskip

By setting: $\varepsilon = 1/10$, $\beta = 0.47$, $a = -2$, $b = 4$, $d=3$, $c_1 = 280/729$, $c_2 = -26/27$ and while considering that the ``duck parameter'' $\gamma$ can vary, $D_1$ and $D_2$ are respectively polynomial equations of degree two and three in $\gamma$. These quadratic and cubic functions $D_1$ and $D_2$ have been plotted on Fig. 5. One can see that between the lower limit called $\gamma_{saddle-node}$ and, the upper limit called $\gamma_{Hopf}$ corresponding to the value of the parameter $\gamma$ for which the real parts of both complex eigenvalues vanishes (see \textit{proof} in Appendix C), $D_1$ and $D_2$ are strictly positive. So, for $\gamma \in [\gamma_{saddle-node}, \gamma_{Hopf}]$ (purple rectangle on Fig. 5) the fixed points are stable while for $\gamma > \gamma_{Hopf}$ they are unstable. With this set of parameters,

\[
\gamma_{Hopf} \approx 0.274.
\]

\begin{figure}[htbp]
\centerline{\includegraphics[width = 11cm,height = 11cm]{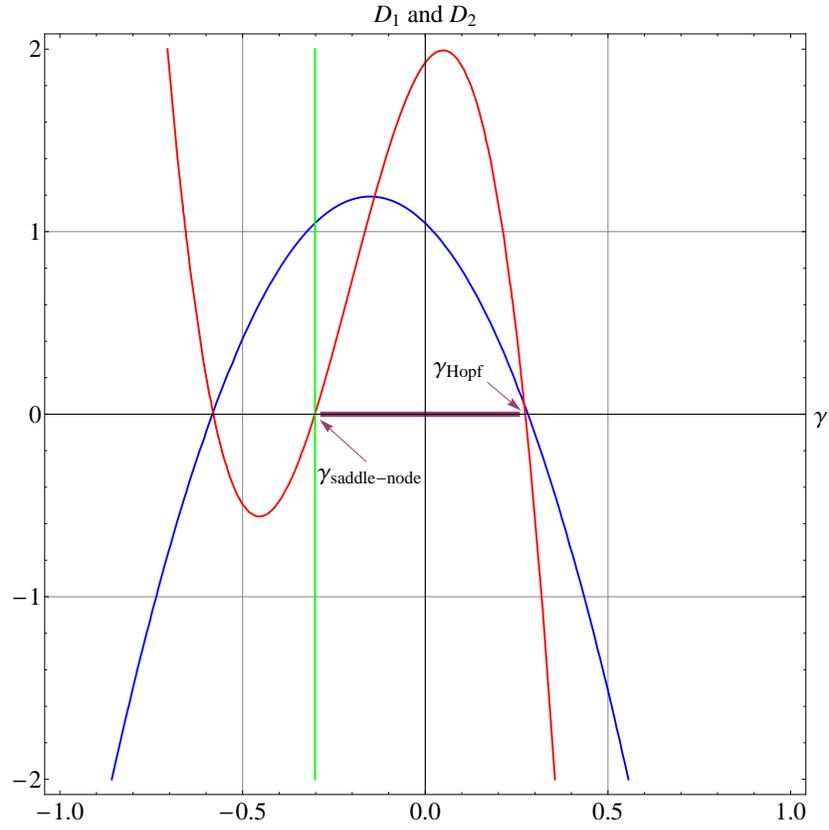}}
\caption{Routh-Hurwitz determinants of system (\ref{eq56}). $D_1$ in blue, $D_2$ in red and the saddle-node axis $\gamma = 2 \beta c_2/3$ in green for parameter values: $\varepsilon = 1/10$, $\beta = 0.47$, $a = -2$, $b = 4$, $d=3$, $c_1 = 280/729$ and $c_2 = -26/27$.}
\label{fig5}
\end{figure}

Thus, it appears from what precedes and from Prop. 1 that ``canards solutions'' may be observed in system (\ref{eq56}) for $\gamma_{Duck}$ values such that:

\begin{equation}
\label{eq66}
\gamma_{saddle-node} = \dfrac{  2 \beta c_2 }{3} < \gamma_{Hopf} < \gamma_{Duck}
\end{equation}

On Fig. 6, numerical ``canards solutions'' and \textit{slow manifold} of system (\ref{eq56}) have been plotted for the ``duck parameter'' $\gamma_{Duck} = 0.3275$ (all other parameters are the same as indicated above). Due to the symmetry of the system (\ref{eq56}), any of the two \textit{pseudo singular points} plotted in green on Fig. 6 was chosen as initial condition.

\begin{figure}[htbp]
\centerline{\includegraphics[width = 11cm,height = 11cm]{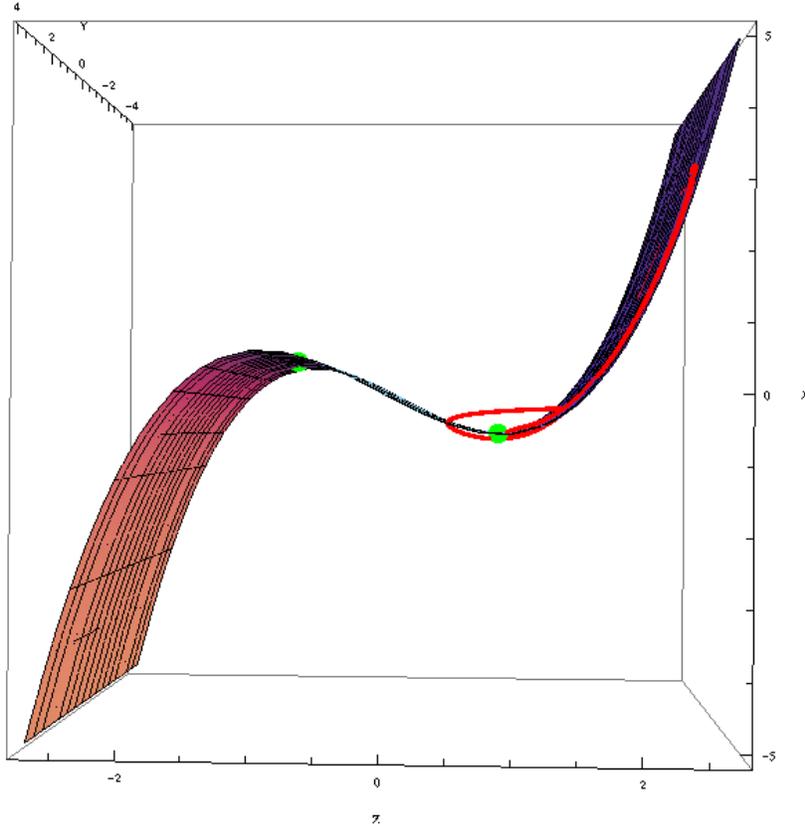}}
\caption{Numerical ``canards solutions'' and \textit{slow manifold} of system (\ref{eq56}) for parameter values: $\varepsilon = 1/10$, $\beta = 0.47$, $\gamma_{Duck} = 0.3275$, $a = -2$, $b = 4$, $d=3$, $c_1 = 280/729$ and $c_2 = -26/27$.}
\label{fig6}
\end{figure}

\subsection{Particular case}

In a previous work entitled ``Canards from Chua's circuits'', Ginoux \textit{et al.} \cite{GinouxLLibre2013} have studied the system (\ref{eq56}) with the following particular parameters:

\[
\gamma = \beta = \alpha \quad c_1 = \frac{1}{3} \quad c_2 = -1
\]

First, let's replace these parameters in the above conditions $C_1$ and $C_2$ (\ref{eq61}). We have:

\[
\begin{aligned}
C_1:& \quad \Delta = 1 + \dfrac{40 \alpha}{3} > 0, \hfill \\
C_2:& \quad q = - \dfrac{10 \alpha}{3}  < 0.
\end{aligned}
\]

Obviously, if $\alpha >0$, then both conditions $C_1$ and $C_2$ are verified. This is exactly the result provided by Itoh and Chua \cite{ItohChua1992} as it has been noticed in Ginoux \textit{et al.} \cite[p. 1330010-4]{GinouxLLibre2013}. However, it has been also remarked in our same previous paper \cite{GinouxLLibre2013} that the system (\ref{eq56}) admits, except the origin, two fixed points, the stability of which could preclude the existence of ``canards solutions''. By setting: $\gamma = \beta = \alpha$, $c_1 = \frac{1}{3}$ and $c_2 = -1$ in Eq. (\ref{eq63}) we find again the fixed points obtained by Ginoux \textit{et al.} \cite[p. 1330010-4]{GinouxLLibre2013}:

\[
x_1^* = \pm \sqrt{6}, \quad x_2^* = \mp \sqrt{6}, \quad y_1^* = \mp \sqrt{6}.
\]

Moreover, still using the Routh-Hurwitz' theorem and by setting: $\gamma = \beta = \alpha$, $c_1 = \frac{1}{3}$ and $c_2 = -1$ in Eq. (\ref{eq65}) we find that:

\[
\begin{aligned}
D_1 & = 1 - 5 \alpha + \varepsilon \alpha, \hfill \\
D_2 & = \alpha^2 \left(5 \varepsilon -\varepsilon ^2\right) - 25 \alpha + 5. \hfill \\
\end{aligned}
\]

\smallskip

Functions $D_1$ and $D_2$ have been plotted on Fig. 7 on which one can see that between the lower limit called $\alpha_{saddle-node}$ and, the upper limit called $\alpha_{Hopf}$ corresponding to the value of the parameter $\alpha$ for which the real parts of both complex eigenvalues vanishes, $D_1$ and $D_2$ are strictly positive. So, for $\alpha \in [\alpha_{saddle-node}, \alpha_{Hopf}]$ (purple rectangle on Fig. 7) the fixed points are stable while for $\alpha > \alpha_{Hopf}$, \textit{i.e.} $\alpha > 1/5$ they are unstable.

\begin{figure}[htbp]
\centerline{\includegraphics[width = 11cm,height = 11cm]{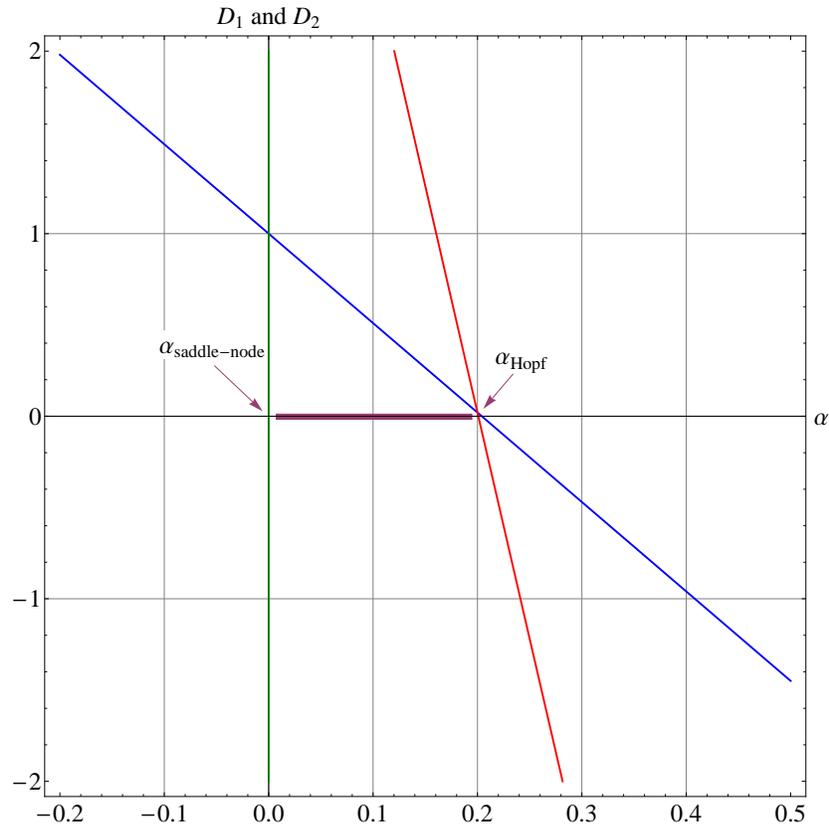}}
\caption{Routh-Hurwitz determinants of system (\ref{eq56}). $D_1$ in blue, $D_2$ in red and the the saddle-node axis $\gamma = 0$ in green for parameter values: $\varepsilon = 1/10$, $\beta = \gamma = \alpha$, $c_1 = 1/3$ and $c_2 = -1$.}
\label{fig7}
\end{figure}

Thus, it appears from what precedes and from Prop. \ref{prop2} that ``canards solutions'' may be observed in system (\ref{eq56}) provided that:

\[
\alpha_{saddle-node} = 0 < \alpha_{Hopf} < \alpha_{Duck}
\]

This is exactly the result obtained by Ginoux \textit{et al.} \cite[p. 1330010-6]{GinouxLLibre2013}. The phase portrait of system (\ref{eq56}) with this set of parameter values has already been published by Ginoux \textit{et al.} \cite{GinouxLLibre2013}.

\section{Fourth-order Memristor-Based canonical oscillator}
\label{Sec6}

Let's consider again the Memristor-Based canonical Chua's circuit \cite{ItohChua2008,ItohChua2013}. By adding an inductor in parallel with conductance $-G$, Fitch \textit{et al.} \cite{Fitch2012} have modified this circuit in order to obtain a \textit{fourth-order Memristor-Based canonical oscillator} (see Fig. 8).

\begin{figure}[htbp]
\centerline{\includegraphics[width = 8.5cm,height = 3.145cm]{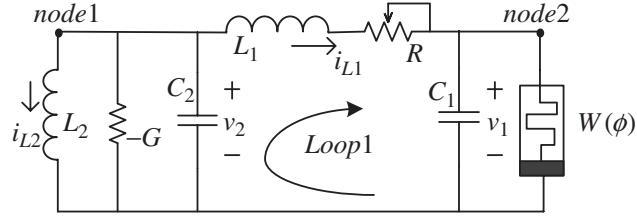}}
\caption{Memristor canonical Chua's circuit \cite{Fitch2012}.}
\label{Fig8}
\end{figure}

\subsection{Flux-linkage and charge phase space}

Applying Kirchhoff's circuit laws to the nodes $1$, $2$ and the loop $1$ of the circuit Fig. 8, Fitch \textit{et al.} \cite{Fitch2012} obtained the following set of differential equations, \textit{i.e.}, the following \textit{memristor based chaotic circuit}:

\begin{equation}
\label{eq67}
\begin{aligned}
C_1 \dfrac{d\varphi_1}{dt} & = R q_1 - k\left( \varphi_1 \right), \hfill \\
C_2 \dfrac{d\varphi_2}{dt} & = - q_2 + G \varphi_2 - q_1, \hfill \\
L_1 \dfrac{dq_1}{dt} & = \varphi_2 - \varphi_1 - R q_1, \hfill \\
L_2 \dfrac{dq_3}{dt} & = \varphi_2,
\end{aligned}
\end{equation}

\vspace{0.1in}

where the classical piecewise-linear function $k(\varphi)$ of the Chua's memristor (\ref{eq50}) has been replaced by the cubic $\hat{k}(\varphi) = c_1 \varphi^3 + c_2 \varphi$.

\vspace{0.1in}

By setting $x= \varphi_1$, $y = q_1$, $z = \varphi_2$, $u=q_2$, $C_1 = \varepsilon$, $C_2 = 1$, $\beta_1 = \dfrac{1}{L_1}$, $\beta_2 = \dfrac{1}{L_2}$, $G = -\alpha_2$ and $R =1$ the \textit{memristor based chaotic circuit} (\ref{eq67}) can be written:

\begin{equation}
\label{eq68}
\begin{aligned}
\varepsilon \dfrac{dx}{dt} & = y - k\left( x \right), \hfill \\
\dfrac{dy}{dt} & = - u - \alpha_2 z - y, \hfill \\
\dfrac{dz}{dt} & = \beta_1 \left(z - x - y \right), \hfill \\
\dfrac{du}{dt} & = \beta_2 z.
\end{aligned}
\end{equation}

\vspace{0.1in}

Now, let's make the following variable changes in Eqs. (\ref{eq68}) in order to apply the method presented in Sec. 4:

\[
x \rightarrow u, \quad y \rightarrow x, \quad u \rightarrow y.
\]

Thus, we have:

\begin{equation}
\label{eq69}
\begin{aligned}
\dfrac{dx}{dt} & = \beta_1 \left(z - x - u \right), \hfill \\
\dfrac{dy}{dt} & = \beta_2 z, \hfill \\
\dfrac{dz}{dt} & = - y - \alpha_2 z - x, \hfill \\
\varepsilon \dfrac{du}{dt} & = x - k\left( u \right).
\end{aligned}
\end{equation}

\vspace{0.1in}

Let's notice that system (\ref{eq69}) is exactly identical to that studied by Ginoux \textit{et al.} \cite{GinouxLLibre2013}. Thus, condition (we will provide below) for the existence of canard solutions in system (\ref{eq69}) will be compared to that given in our previous works entitled ``Canards from Chua's circuits'' \cite{GinouxLLibre2013}.\\

Finally, let's replace the variables ($x,y,z,u$) by ($x_1, x_2, x_3, y_1$) and let's apply the method presented in Sec. 4 to the following system (\ref{eq56}) where $k(y_1) = c_1 y_1^3 + c_2 y_1$.

\begin{equation}
\label{eq70}
\begin{aligned}
\dfrac{dx_1}{dt} & = \beta_1 \left(x_3 - x_1 - y_1 \right), \hfill \\
\dfrac{dx_2}{dt} & = \beta_2 x_3, \hfill \\
\dfrac{dx_3}{dt} & = - x_2 - \alpha_2 x_3 - x_1, \hfill \\
\varepsilon \dfrac{dy_1}{dt} & = x_1 - k\left( y_1 \right).
\end{aligned}
\end{equation}

\subsection{Critical manifold and contrained system}

The critical manifold of this system (\ref{eq70}) is given by $x_1 - k(y_1) = 0$. According to Eq. (\ref{eq35}) the constrained system on the critical manifold reads:

\begin{equation}
\label{eq71}
\begin{aligned}
\dfrac{dx_1}{dt} & = \beta_1 \left(x_3 - x_1 - y_1 \right), \hfill \\
\dfrac{dx_2}{dt} & = \beta_2 x_3, \hfill \\
\dfrac{dx_3}{dt} & = - x_2 - \alpha_2 x_3 - x_1, \hfill \\
\dfrac{dy_1}{dt} & = - \frac{ \beta_1 \left( x_3 - x_1 - y_1 \right) }{ - \left( 3 c_1 y_1^2 + c_2  \right) }, \hfill \vspace{6pt} \\
0 & = x_1 - k\left( y_1 \right).
\end{aligned}
\end{equation}

\subsection{Normalized slow dynamics}

Then, by rescaling the time by setting $t = -  \dfrac{\partial g_1}{\partial y_1} \tau = (3 c_1 y_1^2 + c_2) $ we obtain the ``normalized slow dynamics'':

\begin{equation}
\label{eq72}
\begin{aligned}
\dfrac{dx_1}{dt} & = \beta_1 \left(x_3 - x_1 - y_1 \right)\left( 3 c_1 y_1^2 + c_2  \right) = F_1 \left( x_1, x_2, x_3, y_1 \right), \hfill \\
\dfrac{dx_2}{dt} & = \beta_2 x_3 \left( 3 c_1 y_1^2 + c_2  \right) = F_2 \left( x_1, x_2, x_3, y_1 \right), \hfill \\
\dfrac{dx_3}{dt} & = \left( - x_2 - \alpha_2 x_3 - x_1 \right)\left( 3 c_1 y_1^2 + c_2  \right) = F_3 \left( x_1, x_2, x_3, y_1 \right), \hfill \\
\dfrac{dy_1}{dt} & = \beta_1 \left( x_3 - x_1 - y_1 \right) = G_1 \left( x_1, x_2, x_3, y_1 \right), \hfill \vspace{6pt} \\
0 & = x_1 - k\left( y_1 \right).
\end{aligned}
\end{equation}

\subsection{Pseudo singular manifold}

According to Eqs. (\ref{eq38}), the \textit{pseudo singular manifold} of system (\ref{eq70}) is defined by:

\begin{equation}
\label{eq73}
\left( \tilde{x}_{1},x_2, \tilde{x}_{3}, \tilde{y}_{1} \right) = \left(\pm \dfrac{2c_2}{3}\sqrt{\dfrac{- c_2}{3c_1}}, x_2,
\pm ( \dfrac{2c_2}{3} + 1 ) \sqrt{\dfrac{- c_2}{3c_1}}, \pm \sqrt{\dfrac{- c_2}{3c_1}} \right)
\end{equation}

\vspace{0.1in}

Let's notice that $\tilde{x}_{2}$ is undetermined. In ``Canards from Chua's circuit'', Ginoux \textit{et al.} \cite{GinouxLLibre2013} have arbitrarily chosen $\tilde{x}_{2} =0$. We will see in the following that this choice does not affect their results.\\

The Jacobian matrix of system (\ref{eq72}) evaluated at ($\tilde{x}_1^{\pm}, x_2, \tilde{x}_3^{\pm}, \tilde{y}_1^{\pm}$) reads:

\begin{equation}
\label{eq74}
J_{(F_1, F_2, F_3, G_1)} =  \begin{pmatrix}
0  &  0  &  0  &  0 \vspace{6pt} \\
0  &  0  &  0  &  6 \beta_2 c_1 \tilde{x}_{3\pm} \tilde{y}_{1\pm} \vspace{6pt} \\
0  &  0  &  0  &  -6 c_1 \left( \alpha_2 \tilde{x}_{3\pm} + x_2 + c_1 \tilde{y}_{1\pm}^3 + c_2 \tilde{y}_{1\pm} \right)\tilde{y}_{1\pm}  \vspace{6pt} \\
- \beta_1  &  0  &   \beta_1  &  - \beta_1  \vspace{6pt}
\end{pmatrix}
\end{equation}

\begin{remark}
Although, the \textit{pseudo singular manifold} has not been transformed into ($0, x_2, 0, 0$) extension of Beno\^{i}t's generic hypotheses (\ref{eq40}-\ref{eq41}) are satisfied.
\end{remark}

\subsection{Canard existence in fourth-order memristor Chua's circuit}

According to Eqs. (\ref{eq46}) we find that:

\begin{equation}
\label{eq75}
\begin{aligned}
p & = Tr(J) = -\beta_1, \hfill  \\
q & = \sigma_2 = + 6 \beta_1 c_1 \left( \alpha_2 \tilde{x}_{3\pm} + x_2 + \tilde{x}_{1\pm} \right)\tilde{y}_{1\pm}
\end{aligned}
\end{equation}

\smallskip

Thus, the conditions $C_1$ and $C_2$ for ($\tilde{x}_1^{\pm}, x_2, \tilde{x}_3^{\pm}, \tilde{y}_1^{\pm}$) to be of saddle type reads:

\begin{equation}
\label{eq76}
\begin{aligned}
C_1:& \quad \Delta = \beta_1 \left[ \beta_1 - 24 c_1 \left( \alpha_2 \tilde{x}_{3\pm} + x_2 + \tilde{x}_{1\pm} \right)\tilde{y}_{1\pm}  \right] > 0, \hfill \\
C_2:& \quad q= + 6 \beta_1 c_1 \left( \alpha_2 \tilde{x}_{3\pm} + x_2 + \tilde{x}_{1\pm} \right)\tilde{y}_{1\pm}  < 0.
\end{aligned}
\end{equation}

\smallskip

Then, due to the nature ($\pm$) of the \textit{pseudo singular manifold} (\ref{eq73}) we have two cases corresponding to the positive and negative values.

\subsection{Positive case}

Let's consider the positive case for which the \textit{pseudo singular manifold} (\ref{eq73}) can be written as:

\[
 \left( \tilde{x}_1^{+}, x_2, \tilde{x}_3^{+}, \tilde{y}_1^{+} \right) = \left( + \dfrac{2c_2}{3}\sqrt{\dfrac{- c_2}{3c_1}}, x_2,
+ ( \dfrac{2c_2}{3} + 1 ) \sqrt{\dfrac{- c_2}{3c_1}}, + \sqrt{\dfrac{- c_2}{3c_1}} \right).
\]

\smallskip

Conditions $C_1$ and $C_2$ reads then:

\begin{subequations}
\label{eq77}
\begin{align}
C_1:& \quad \alpha_2 \tilde{x}_{3+} + x_2 + \tilde{x}_{1+} < \dfrac{\beta_1}{24 c_1 \tilde{y}_{1+}} , \hfill \\
C_2:& \quad \alpha_2 \tilde{x}_{3+} + x_2 + \tilde{x}_{1+}  < 0.
\end{align}
\end{subequations}

\smallskip

Obviously, since the right hand side of the first inequality (77a) is positive ($\beta_1 >0$, $c_1 >0$ and $\tilde{y}_{1+} >0$), both conditions are satisfied provided that the condition $C_2$ is verified. So, to have \textit{pseudo singular manifold} of saddle type, the straight line $\alpha_2 \tilde{x}_{3+} + x_2 + \tilde{x}_{1+}$ must verify:

\begin{equation}
\label{eq78}
\alpha_2 \tilde{x}_{3+} + x_2 + \tilde{x}_{1+} < 0.
\end{equation}

\smallskip

By taking into account the above preliminary result and while fixing all the parameters excepted $\alpha_2$, this straight line ($D_{+}$) can be plotted in the plane ($x_2, \alpha_2$) and reads:

\begin{equation}
\label{eq79}
(D_{+}): \quad  \alpha_2 \left( \dfrac{2c_2}{3} + 1 \right) \sqrt{\dfrac{- c_2}{3c_1}} + x_2 + \dfrac{2c_2}{3} \sqrt{\dfrac{- c_2}{3c_1}} < 0.
\end{equation}

Let's notice that for:

\[
\begin{aligned}
\alpha_2 & = 0, \quad x_2 = - \dfrac{2c_2}{3} \sqrt{\dfrac{- c_2}{3c_1}}, \hfill \\
x_2 & = 0, \quad \alpha_2 = - \dfrac{2c_2}{2c_2 + 3}. \hfill
\end{aligned}
\]

\newpage

\subsection{Negative case}

Let's consider the negative case for which the \textit{pseudo singular manifold} (\ref{eq73}) can be written as:

\[
\left( \tilde{x}_1^{-}, x_2, \tilde{x}_3^{-}, \tilde{y}_1^{-} \right) = \left( - \dfrac{2c_2}{3}\sqrt{\dfrac{- c_2}{3c_1}},
- ( \dfrac{2c_2}{3} + 1 ) \sqrt{\dfrac{- c_2}{3c_1}},
- \sqrt{\dfrac{- c_2}{3c_1}} \right).
\]

Conditions $C_1$ and $C_2$ reads then:

\begin{subequations}
\label{eq80}
\begin{align}
C_1:& \quad \dfrac{\beta_1}{24 c_1 \tilde{y}_{1-}}  < \alpha_2 \tilde{x}_{3-} + x_2 + \tilde{x}_{1-}, \hfill \\
C_2:& \quad 0 < \alpha_2 \tilde{x}_{3-} + x_2 + \tilde{x}_{1-}.
\end{align}
\end{subequations}

\smallskip

Obviously, since the left hand side of the first inequality (80a) is negative ($\beta_1 >0$, $c_1 >0$ and $\tilde{y}_{1-} < 0$), both conditions are satisfied provided that the condition $C_2$ is satisfied. So, to have \textit{pseudo singular manifold} of saddle type, the straight line $\alpha_2 \tilde{x}_{3-} + x_2 + \tilde{x}_{1-}$ must verify:

\begin{equation}
\label{eq81}
\alpha_2 \tilde{x}_{3-} + x_2 + c_1 \tilde{y}_{1-}^3 + c_2 \tilde{y}_{1-} > 0.
\end{equation}

\smallskip

By taking into account the above preliminary result and while fixing all the parameters excepted $\alpha_2$, this straight line ($D_{-}$) can be plotted in the plane ($x_2, \alpha_2$) and reads:

\begin{equation}
\label{eq82}
(D_{-}): \quad  - \alpha_2 \left( \dfrac{2c_2}{3} + 1 \right) \sqrt{\dfrac{- c_2}{3c_1}} + x_2 - \dfrac{2c_2}{3} \sqrt{\dfrac{- c_2}{3c_1}} < 0.
\end{equation}

Let's notice that for:

\[
\begin{aligned}
\alpha_2 & = 0, \quad x_2 = \dfrac{2c_2}{3} \sqrt{\dfrac{- c_2}{3c_1}}, \hfill \\
x_2 & = 0, \quad \alpha_2 = - \dfrac{2c_2}{2c_2 + 3}. \hfill
\end{aligned}
\]

\smallskip

\begin{figure}[htbp]
  \begin{center}
    \begin{tabular}{c}
      \includegraphics[width=11cm,height=11cm]{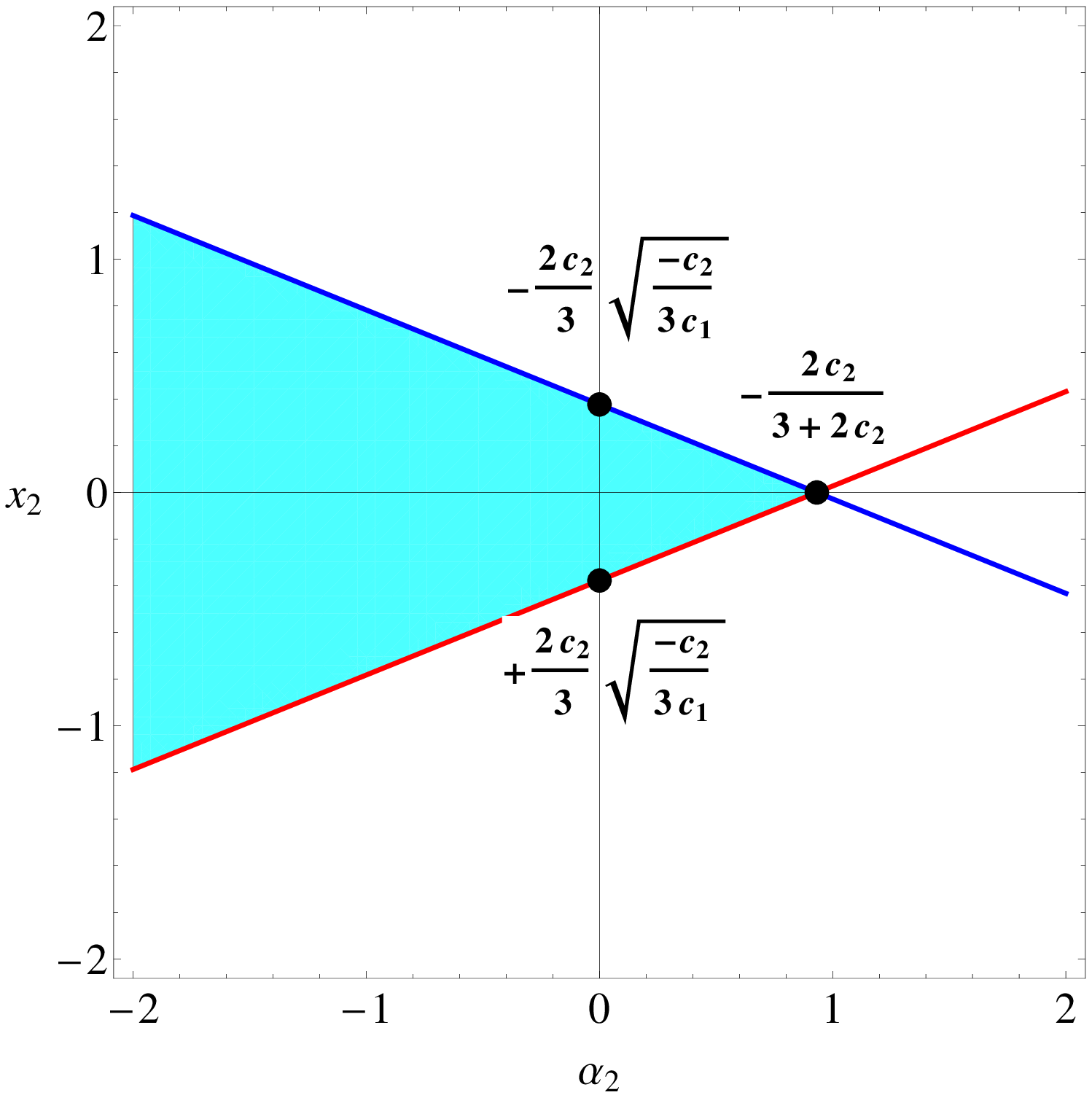} \\
     \hspace{1.75cm} (a) \hspace{0.85cm} \\[0.2cm]
      \includegraphics[width=11cm,height=11cm]{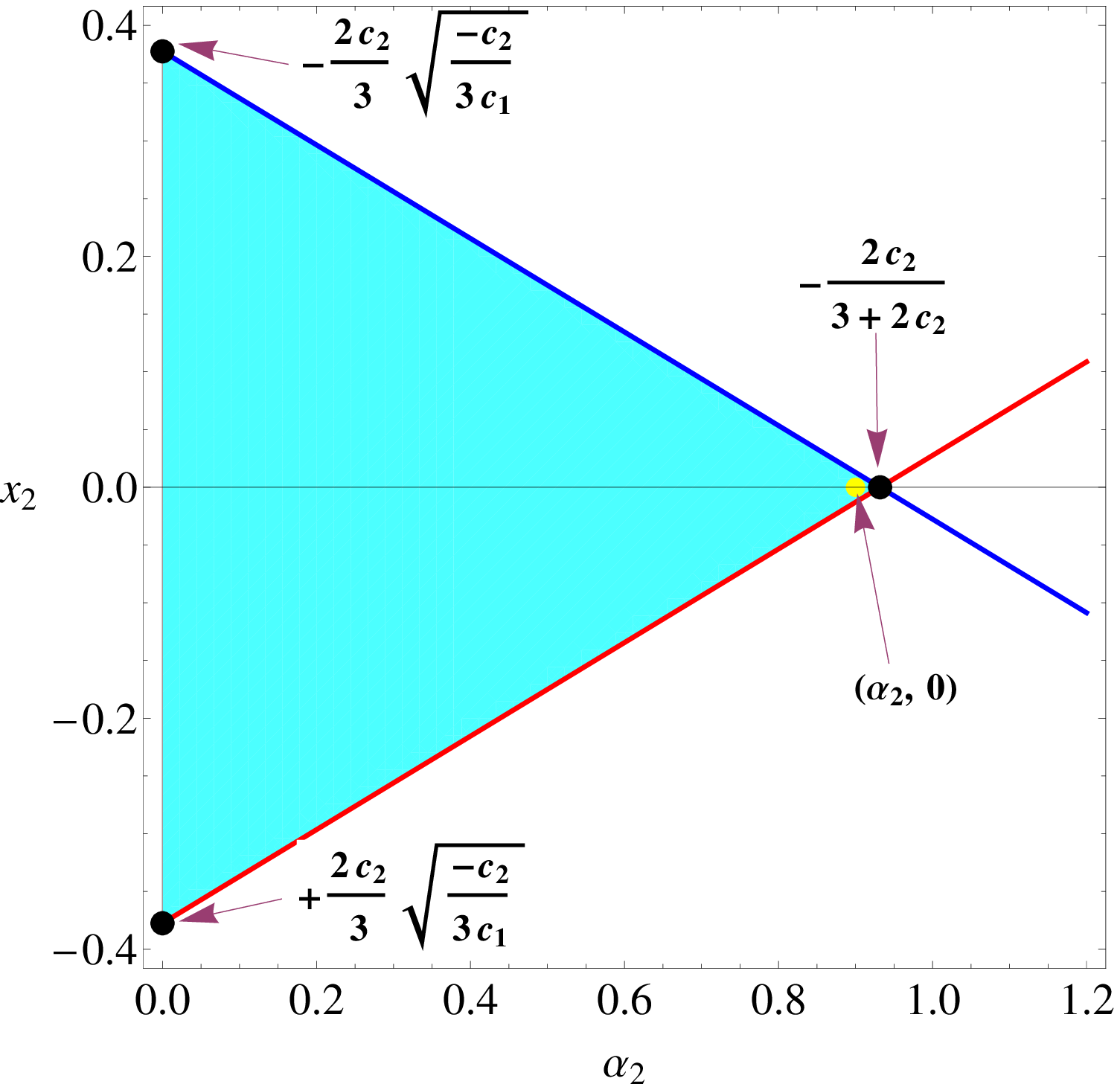} \\
     \hspace{1.75cm} (b) \hspace{0.6cm} \\[0.2cm]
    \end{tabular}
    \caption{Region within which the \textit{pseudo singular points} of fourth-order memristor Chua's circuit (\ref{eq70}) are of saddle type.}
    \label{fig9}
  \end{center}
  \vspace{-0.5cm}
\end{figure}

\smallskip

On Fig. 9a \& 9b, the straight lines ($D_{+}$) and ($D_{-}$) have been plotted in blue and in red (respectively). Thus, the region within which the \textit{pseudo singular points} are of saddle type corresponds to the cyan triangle. A zoom of Fig. 9a is presented on Fig. 9b. Let's notice on the one hand that the point ($\alpha_2 = 0.9, x_2 =0$) arbitrarily chosen by Ginoux \textit{et al.} \cite{GinouxLLibre2013} and plotted in yellow on Fig. 9b belongs to the cyan region within which the \textit{pseudo singular points} are of saddle type. On the other hand, this cyan triangular region is limited on the right, at the top of the triangle, by the point of coordinate ($\alpha_2 = - \dfrac{2c_2}{2c_2 + 3}, x_2=0$) which corresponds exactly with the condition stated in Ginoux \textit{et al.} \cite{GinouxLLibre2013} and for which canard solutions have been observed in Chua's system 4D (\ref{eq70}) according to Prop. 2. In other words, to have a \textit{pseudo singular point} of saddle type at $x_2 = 0$, $\alpha_2 < - \frac{2c_2}{2c_2 + 3}$. To confirm this fact, the two nonzero eigenvalues of the characteristic polynomial associated with the Jacobian matrix (\ref{eq74}) evaluated at ($\tilde{x}_{1},x_2, \tilde{x}_{3}, \tilde{y}_{1}$) (\ref{eq73}) have been computed for $\alpha_2 = 0.9$ and for the corresponding values of $x_2$ which have been taken equal to zero by Ginoux \textit{et al.} \cite{GinouxLLibre2013} but which is in fact very small $x_2 = \mp 0.01$. We have found that the two nonzero real eigenvalues are of opposite sign what corresponds to the case of \textit{pseudo singular points} of saddle type.\\

So, the value of the ``duck parameter'' $\alpha_2$ for which the \textit{pseudo singular points} are of saddle-type is defined by:

\begin{equation}
\label{eq83}
\alpha_2 < \alpha_{2saddle-node} = - \dfrac{2c_2}{3 + 2c_2}.
\end{equation}

\smallskip

where $\alpha_{2saddle-node}$ represents the critical value of the parameter $\alpha_2$ for which one of the two remaining eigenvalues $\lambda_1$ or $\lambda_2$ of the eigenpoynomial associated with the Jacobian matrix (\ref{eq74}) vanishes. With this set of parameters $\varepsilon = 1/10.1428$, $\beta _1 = 0.121$, $\beta _2 = 0.0047$, $c_1 = 0.393781$ and $c_2 = -0.72357$,

\[
\alpha_{2saddle-node} = - \dfrac{2c_2}{3 + 2c_2} \approx 0.932.
\]

\subsection{Fixed points stability and Routh-Hurwitz' theorem}

However, as pointed out in the previous Sect. 5.7 the system (\ref{eq70}) admits the origin $O(0,0,0,0)$ as fixed point, the stability of which could preclude the existence of ``canards solutions''. The \textit{eigenpolynomial} equation of the Jacobian matrix of system (\ref{eq70}) evaluated at this fixed point reads:

\begin{equation}
\label{eq84}
a_4\lambda^4 +a_3 \lambda^3 + a_2 \lambda^2  + a_1 \lambda  + a_0 = 0
\end{equation}

where

\[
\begin{aligned}
a_0 & = \left(1 + c_2\right)\beta_1 \beta_2, \hfill \\
a_1 & = c_2 \left(\left( 1 + \alpha_2 \right) \beta_1 + \beta _2 \right) + \beta_1 \left(\alpha_2 + \varepsilon \beta_2\right), \hfill \\
a_2 & = \left(1 + \varepsilon + \varepsilon \alpha_2 \right) \beta_1 + c_2 \left(\alpha_2 + \beta_1 \right) + \varepsilon \beta_2, \hfill \\
a_3 & = c_2 + \varepsilon \left(\alpha_2 + \beta_1 \right), \hfill \\
a_4 & = \varepsilon.
\end{aligned}
\]

Let suppose that all the parameters are fixed except $\alpha_2$, \textit{i.e.} the ``duck parameter'' and, let's make use again of the Routh-Hurwitz' theorem \cite{Routh1877,Hurwitz1893}. Thus, it states that if $D_1 = a_1$, $D_2 = a_1a_2 - a_0 a_3$ and $D_3 = a_1 a_2 a_3 - a_0 a_3^2 - a_1^2 a_4$ are all positive then \textit{eigenpolynomial} equation would have eigenvalues with real negative parts. In other words, if $D_1$, $D_2$ and $D_3$ are positive the fixed point will be stable.

\begin{figure}[htbp]
\centerline{\includegraphics[width = 11cm,height = 11cm]{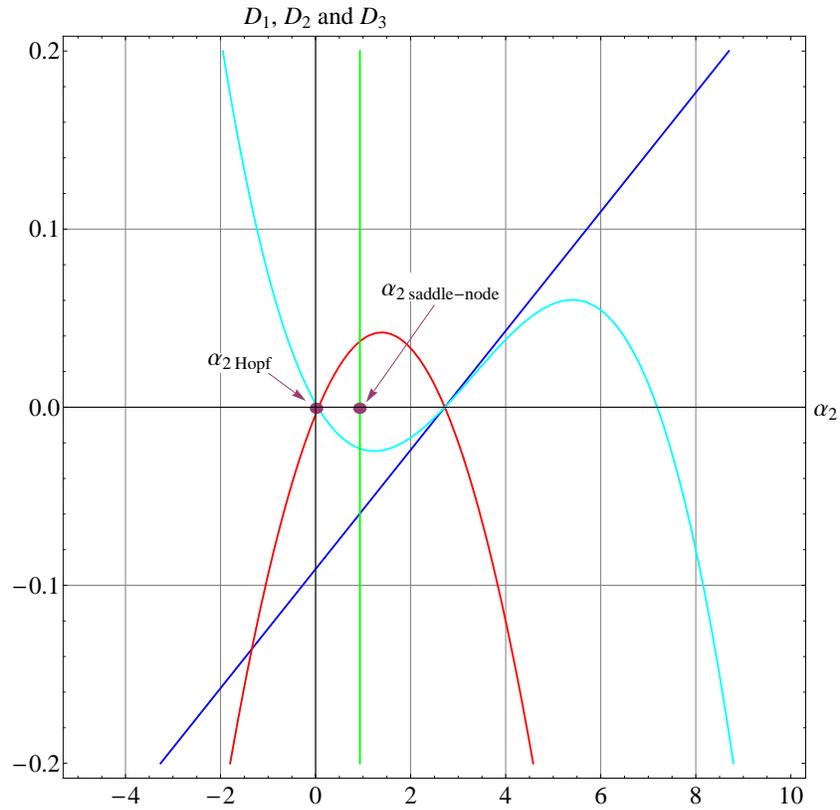}}
\caption{Routh-Hurwitz determinants of system (\ref{eq70}). $D_1$ in blue, $D_2$ in red, $D_3$ in cyan and the saddle-node axis $\alpha_{2saddle-node} = -2c_2 / (3 + 2c_2)$ in green for parameter values: $\varepsilon = 1/10.1428$, $\beta _1 = 0.121$, $\beta _2 = 0.0047$, $c_1 = 0.393781$ and $c_2 = -0.72357$.}
\label{fig10}
\end{figure}

By setting $\varepsilon = 1/10.1428$, $\beta _1 = 0.121$, $\beta _2 = 0.0047$, $c_1 = 0.393781$ and $c_2 = -0.72357$ and while considering that the ``duck parameter'' $\alpha_2$ can vary, the functions $D_1$, $D_2$ and $D_3$ have been plotted on Fig. 10. One can see that between the lower limit called $\alpha_{2Hopf}$ corresponding to the value of the parameter $\alpha_2$ for which the real parts of both complex eigenvalues vanishes (see \textit{Proof} in the Appendix D.) and the upper limit called $\alpha_{2saddle-node}$, $D_1$ and $D_3$ are negative while $D_2$ is positive. So, in this interval, the fixed point is unstable. With this set of parameters,

\[
\alpha_{2Hopf} \approx 0.0451 \quad \mbox{ and } \quad \alpha_{2saddle-node} = - \dfrac{2c_2}{3 + 2c_2} \approx 0.932.
\]

Thus, we deduce from what precedes and from Prop. 2 that ``canards solutions'' may be observed in system (\ref{eq70}) provided that:

\begin{equation}
\label{eq85}
\alpha_{2Hopf} < \alpha_{2Duck} < \alpha_{2saddle-node} = \dfrac{-2c_2}{3 + 2c_2}
\end{equation}

\smallskip

On Figs. 11 \& 12, numerical ``canards solutions'' and \textit{critical manifold} of system (\ref{eq70}) have been plotted for the ``duck parameter'' $\alpha_{2Duck} = 0.1$ (all other parameters are the same as indicated above). Due to the symmetry of the system (\ref{eq70}), any of the two \textit{pseudo singular points} plotted in green on Figs. 11 \& 12 was chosen as initial condition.

\begin{figure}[htbp]
\centerline{\includegraphics[width = 11cm,height = 11cm]{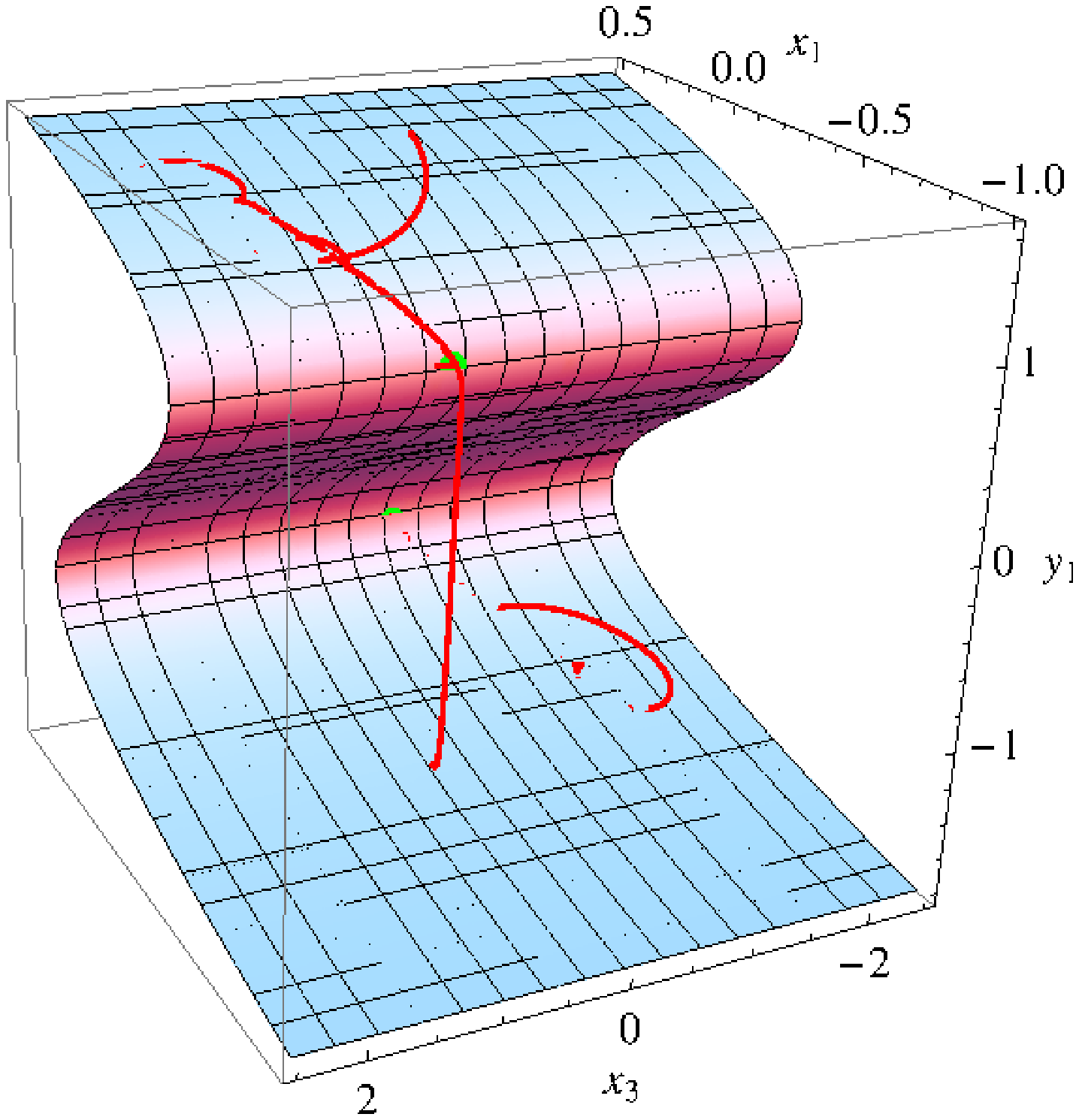}}
\vspace{0.1in}
\caption{Numerical ``canards solutions'' and \textit{critical manifold} of system (\ref{eq70}) in the ($x_1,x_3,y_1$) phase space for parameter values: $\varepsilon = 1/10.1428$, $\alpha_2 = 0.1$, $\beta _1 = 0.121$, $\beta _2 = 0.0047$, $c_1 = 0.393781$ and $c_2 = -0.72357$.}
\label{fig11}
\end{figure}

\begin{figure}[htbp]
\centerline{\includegraphics[width = 11cm,height = 11cm]{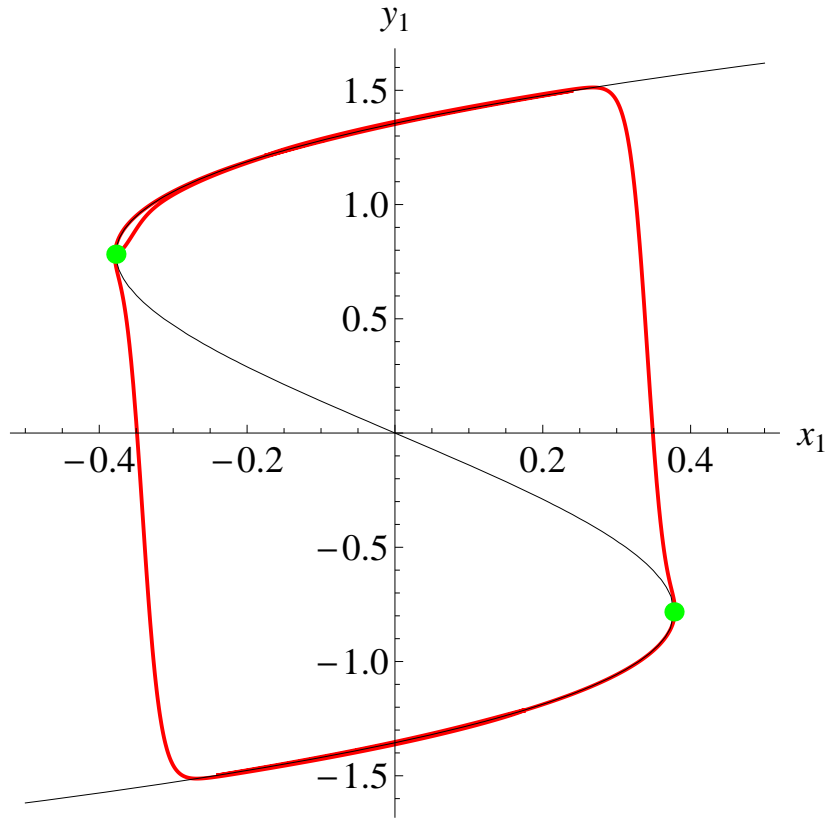}}
\vspace{0.1in}
\caption{Numerical ``canards solutions'' and \textit{critical manifold} of system (\ref{eq70}) ($x_1,y_1$) phase plane for parameter values: $\varepsilon = 1/10.1428$, $\alpha_2 = 0.1$, $\beta _1 = 0.121$, $\beta _2 = 0.0047$, $c_1 = 0.393781$ and $c_2 = -0.72357$.}
\label{fig12}
\end{figure}

\smallskip

\section{Discussion}

In this work we have proposed an alternative method for determining the condition of existence of ``canard solutions'' for three and four-dimensional singularly perturbed systems with only one \textit{fast} variable in the \textit{folded saddle} case. This method enables to highlight a unique generic condition ($\sigma_2 < 0$) for the existence of ``canard solutions'' for such three and four-dimensional singularly perturbed systems which is based on the stability of \textit{folded singularities} of the \textit{normalized slow dynamics} deduced from a well-known property of linear algebra. It has been stated that this unique generic condition was perfectly identical to that provided by Beno\^{i}t \cite{Benoit1983} and then by Szmolyan and Wechselberger \cite{SzmolyanWechselberger2001} and finally by Wechselberger \cite{Wechselberger2012}. Finally, it has been established that this condition is ``generic'' since it is exactly the same for singularly perturbed systems of dimension three and four with only one \textit{fast} variable. Application of this method to the famous three and four-dimensional memristor canonical Chua's circuits for which the classical piecewise-linear characteristic curve has been replaced by a smooth cubic nonlinear function according to the \textit{least squares method} has enabled to show the existence of ``canards solutions'' in such Memristor Based Chaotic Circuits.\\

However, in this paper, only the case of \textit{pseudo singular points} of saddle-type has been analyzed. Of course, the case of \textit{pseudo singular points} of node-type could be also studied with the same method. Moreover, this method could be successfully used for proving the existence of ``canard solutions'' in four-dimensional singularly perturbed systems with two \textit{fast} variables such as the famous Hodgkin-Huxley model or in the so-called coupled FitzHugh-Nagumo system. In a future work we will state that the existence of canard solutions in such systems can be established according to the same unique generic condition ($\sigma_2 < 0$). \\

\section{Acknowledgements}
We would like to thank to Ernesto P\'erez Chavela  for previous
discussions related with this work. The authors are partially supported by a MINECO/FEDER grant number
MTM2008-03437. The second author is partially supported by a
MICINN/FEDER grants numbers MTM2009-03437 and MTM2013-40998-P, by an
AGAUR grant number 2014SGR-568, by an ICREA Academia, two
FP7+PEOPLE+2012+IRSES numbers 316338 and 318999, and
FEDER-UNAB10-4E-378.

\renewcommand{\theequation}{A-\arabic{equation}}
\setcounter{equation}{0}  

\newpage

\section*{Appendix}

Change of coordinates leading to the \textit{normal forms} of three and four-dimensional singularly perturbed systems with one fast variable are given in the following section.

\subsection*{A. Normal form of 3D singularly perturbed systems with one fast variable}

Let's consider the three-dimensional \textit{singularly perturbed dynamical system} (\ref{eq11}) with $k=2$ \textit{slow} variables and $m=1$ \textit{fast} and let's make the following change of variables:

\begin{equation}
x_1 = \alpha^2 x, \quad  x_2 = \alpha y, \quad y_1 = \alpha z \quad \mbox{where} \quad \alpha << 1.
\end{equation}

By taking into account Beno\^{i}t's generic hypothesis Eqs. (\ref{eq20},\ref{eq21}) and while using Taylor series expansion the system (\ref{eq11}) becomes:

\begin{equation}
\label{eqA2}
\begin{aligned}
\dot{x} & = \dfrac{\partial f_1}{\partial y} y + \dfrac{\partial f_1}{\partial z} z, \hfill \vspace{6pt} \\
\dot{y} & = f_2 \left( x, y, z \right), \hfill \vspace{6pt} \\
\dfrac{\varepsilon}{\alpha^2} \dot{z} & = \dfrac{\partial g_1}{\partial x} x + \dfrac{1}{2} \dfrac{\partial^2 g_1}{\partial y^2} y^2 + \dfrac{\partial^2 g_1}{\partial y \partial z} y z + \dfrac{1}{2} \dfrac{\partial^2 g_1}{\partial z^2} z^2. \hfill
\end{aligned}
\end{equation}

\smallskip

Then, let's make the standard polynomial change of variables:

\begin{equation}
\label{eqA3}
\begin{aligned}
X & = A x + B y^2, \hfill \vspace{6pt} \\
Y & = \dfrac{y}{f_2}, \hfill \vspace{6pt} \\
Z & = Cy + Dz. \hfill
\end{aligned}
\end{equation}

\smallskip

From (A-3) we deduce that:

\begin{equation}
\label{eqA4}
\begin{aligned}
x & = \frac{X - B f_2^2 Y^2}{A}, \hfill \vspace{6pt} \\
y & = f_2 Y, \hfill \vspace{6pt} \\
z & = \frac{Z - C f_2 Y}{D}. \hfill
\end{aligned}
\end{equation}

The time derivative of system (A-3) gives:

\begin{equation}
\label{eqA5}
\begin{aligned}
\dot{X} & = A \dot{x} + 2 B v \dot{y}, \hfill \vspace{6pt} \\
\dot{Y} & = \dfrac{\dot{y}}{f_2}, \hfill \vspace{6pt} \\
\dot{Z} & =  C\dot{y} + D\dot{z}. \hfill
\end{aligned}
\end{equation}

\smallskip

Then, multiplying the third equation of (A-5) by $( \varepsilon / \alpha^2)$ and while replacing in (A-5) $\dot{x}$, $\dot{y}$ and $\dot{z}$ by the right-hand-side of system (A-2) leads to:

\begin{equation}
\label{eqA6}
\begin{aligned}
\dot{X} & = A \left( \dfrac{\partial f_1}{\partial y} y + \dfrac{\partial f_1}{\partial z} z \right)  + 2 B y f_2, \hfill \vspace{6pt} \\
\dot{Y} & = 1, \hfill \vspace{6pt} \\
\dfrac{\varepsilon}{\alpha^2} \dot{Z} & =  \dfrac{\varepsilon}{\alpha^2}  C f_2 + D\left( \dfrac{\partial g_1}{\partial x} x + \dfrac{1}{2} \dfrac{\partial^2 g_1}{\partial y^2} y^2 + \dfrac{\partial^2 g_1}{\partial y \partial z} y z + \dfrac{1}{2} \dfrac{\partial^2 g_1}{\partial z^2} z^2 \right), \hfill
\end{aligned}
\end{equation}

\smallskip

Since $ \varepsilon / \alpha^2 \ll 1$, the first term of the right-hand-side of the third equation of (A-6) can be neglected. Then, replacing in (A-6) $x$, $y$ and $z$ by the right-hand-side of (A-4) and identifying with the following system in which we have posed: $( \varepsilon / \alpha^2) = \epsilon$:

\begin{equation}
\label{eqA7}
\begin{aligned}
\dot{X} & = a Y + b Z + O \left( X, \varepsilon, Y^2, Y Z, Z^2 \right), \hfill \vspace{6pt} \\
\dot{Y} & = 1 + O \left( X, Y, Z, \varepsilon \right), \hfill \vspace{6pt} \\
\epsilon \dot{Z} & =  -\left( X + Z^2 \right) + O \left( \varepsilon X, \varepsilon Y, \varepsilon Z, \varepsilon^2, X^2 Z, Z^3, X Y Z \right), \hfill
\end{aligned}
\end{equation}

\smallskip
we find:

\begin{equation}
\label{eqA8}
\begin{aligned}
a & = A \left( \dfrac{\partial f_1}{\partial x_2} - \dfrac{C}{D} \dfrac{\partial f_1}{\partial y_1} \right)f_2 + 2 Bf_2^2 , \hfill \\
b & = \dfrac{A}{D} \dfrac{\partial f_1}{\partial y_1}, \hfill  \\
\end{aligned}
\end{equation}

\smallskip

where

\begin{equation}
\label{eqA9}
\begin{aligned}
A & = \frac{1}{2} \dfrac{\partial g_1}{\partial x} \dfrac{\partial^2 g_1}{\partial z^2}, \hfill \vspace{6pt} \\
B & = \dfrac{1}{4} \left[\dfrac{\partial^2 g_1}{\partial y^2}\dfrac{\partial^2 g_1}{\partial z^2} - \left(\dfrac{\partial^2 g_1}{\partial y \partial z}\right)^2  \right], \hfill \vspace{6pt} \\
C & = - \frac{1}{2} \dfrac{\partial^2 g_1}{\partial y \partial z}, \hfill \vspace{6pt} \\
D & = - \frac{1}{2} \dfrac{\partial^2 g_1}{\partial z^2}. \hfill
\end{aligned}
\end{equation}

\smallskip

Finally, we deduce:

\begin{equation}
\label{eqA10}
\begin{aligned}
a = & \frac{1}{2} f_2^2 \left( \dfrac{\partial^2 g_1}{\partial x_2^2 } \dfrac{\partial^2 g_1}{\partial y_1^2} - (\dfrac{\partial^2 g_1}{\partial x_2 \partial y_1})^2 \right) + \frac{1}{2} f_2 \dfrac{\partial g_1}{\partial x_1} \left( \dfrac{\partial^2 g_1}{\partial y_1^2 }\dfrac{\partial f_1}{\partial x_2} - \dfrac{\partial^2 g_1}{\partial x_2 \partial y_1 } \dfrac{\partial f_1}{\partial y_1} \right), \hfill \\
b = & - \dfrac{\partial g_1}{\partial x_1}\dfrac{\partial f_1}{\partial y_1}, \hfill  \\
\end{aligned}
\end{equation}

\smallskip

This is the result established by Beno\^{i}t \cite{Benoit1990} and presented in Sec. 3.7.

\subsection*{B. Normal form of 4D singularly perturbed systems with one fast variable}

Let's consider the four-dimensional \textit{singularly perturbed dynamical system} (\ref{eq30}) with $k=3$ \textit{slow} variables and $m=1$ \textit{fast} and let's make the following change of variables:

\begin{equation}
x_1 = \alpha^2 x \mbox{, }  x_2 = \alpha y \mbox{, }  x_3 = \alpha z \mbox{, } y_1 = \alpha u \mbox{ where } \alpha \ll 1.
\end{equation}

By taking into account extension of Beno\^{i}t's generic hypothesis Eqs. (\ref{eq40},\ref{eq41}) and while using Taylor series expansion the system (\ref{eq30}) becomes:

\begin{equation}
\label{eqA12}
\begin{aligned}
\dot{x} = & \dfrac{\partial f_1}{\partial y} y + \dfrac{\partial f_1}{\partial z} z + \dfrac{\partial f_1}{\partial u} u, \hfill \vspace{6pt} \\
\dot{y} = & f_2 \left( x, y, z, u \right), \hfill \vspace{6pt} \\
\dot{z} = & f_3 \left( x, y, z, u \right), \hfill \vspace{6pt} \\
\dfrac{\varepsilon}{\alpha^2} \dot{u} & = \dfrac{\partial g_1}{\partial x} x + \dfrac{1}{2} \dfrac{\partial^2 g_1}{\partial y^2} y^2  + \dfrac{1}{2} \dfrac{\partial^2 g_1}{\partial z^2} z^2  + \dfrac{1}{2} \dfrac{\partial^2 g_1}{\partial u^2} u^2 + \dfrac{\partial^2 g_1}{\partial y \partial z} y z +  \dfrac{\partial^2 g_1}{\partial y \partial u} y u + \dfrac{\partial^2 g_1}{\partial z \partial u} z u . \hfill
\end{aligned}
\end{equation}

\newpage

Then, let's make the standard polynomial change of variables:

\begin{equation}
\label{eqA13}
\begin{aligned}
X &  = A x + B y^2 + C z^2, \hfill \vspace{6pt} \\
Y & = \dfrac{y}{f_2}, \hfill \vspace{6pt} \\
Z & = \dfrac{z}{f_3} + D y, \hfill \vspace{6pt} \\
U & = E y + F z + G u. \hfill
\end{aligned}
\end{equation}

\smallskip

From (A-13) we deduce that:

\begin{equation}
\label{eqA14}
\begin{aligned}
x & = \frac{X - B f_2^2 Y^2 - C f_3^2 \left( Z - D f_2 Y \right)^2}{A}, \hfill \vspace{6pt} \\
y & = f_2 y, \hfill \vspace{6pt} \\
z & = f_3 \left( Z - D f_2 Y \right), \hfill \vspace{6pt} \\
u & = \frac{U - E f_2 Y - F f_3 \left( Z - D f_2 Y \right) }{G}. \hfill
\end{aligned}
\end{equation}

\smallskip

The time derivative of system (A-13) gives:

\begin{equation}
\label{eqA15}
\begin{aligned}
\dot{X} & = A \dot{x} + 2 B y \dot{y} + 2 C z \dot{z}, \hfill \vspace{6pt} \\
\dot{Y} & = \dfrac{\dot{y}}{f_2}, \hfill \vspace{6pt} \\
\dot{Z} & = \dfrac{\dot{z}}{f_3} + D \dot{y}, \hfill \vspace{6pt} \\
\dot{U} & =  E\dot{y} + F\dot{z} + G \dot{u}. \hfill
\end{aligned}
\end{equation}

\smallskip

Then, multiplying the fourth equation of (A-15) by $( \varepsilon / \alpha^2)$ and while replacing in (A-15) $\dot{x}$, $\dot{y}$, $\dot{z}$ and $\dot{u}$ by the right-hand-side of system (A-12) leads to:

\begin{equation}
\label{eqA16}
\begin{aligned}
\dot{X} = & A \left( \dfrac{\partial f_1}{\partial y} y + \dfrac{\partial f_1}{\partial z} z  + \dfrac{\partial f_1}{\partial u} u\right)  + 2 B y f_2 + 2 C z f_3, \hfill \vspace{6pt} \\
\dot{Y} = & 1, \hfill \vspace{6pt} \\
\dot{Z} = & 1 + D f_2, \hfill \vspace{6pt} \\
\dfrac{\varepsilon}{\alpha^2} \dot{U} = &  \dfrac{\varepsilon}{\alpha^2} E f_2 + \dfrac{\varepsilon}{\alpha^2} F f_3 \hfill + G\left( \dfrac{\partial g_1}{\partial x} x + \ldots + \dfrac{\partial^2 g_1}{\partial z \partial u} z u \right), \hfill
\end{aligned}
\end{equation}

\smallskip

Since $ \varepsilon / \alpha^2 << 1$, the two first terms of the right-hand-side of the fourth equation of (A-16) can be neglected. Then, by replacing in (A-16) $x$, $y$, $z$ and $u$ by the right-hand-side of (A-14) and by identifying with the following system in which we have posed: $( \varepsilon / \alpha^2) = \epsilon$:

\begin{equation}
\label{eqA17}
\begin{aligned}
\dot{X} & = \tilde{a} Y + \tilde{b} U + O \left( X, \epsilon, Y^2, Y U, U^2 \right), \hfill \vspace{6pt} \\
\dot{Y} & = 1 + O \left( X, Y, U, \epsilon \right), \hfill \vspace{6pt} \\
\dot{Z} & = 1 + O \left( X, Y, U, \epsilon \right), \hfill \vspace{6pt} \\
\epsilon \dot{Z} & =  -\left( X + U^2 \right) + O \left( \epsilon X, \epsilon Y, \epsilon U, \epsilon^2, X^2 U, U^3, X Y U  \right), \hfill
\end{aligned}
\end{equation}

\smallskip

we find:

\begin{equation}
\label{eqA18}
\begin{aligned}
\tilde{a} = & A \left( \dfrac{\partial f_1}{\partial x_2} - \dfrac{E}{G} \dfrac{\partial f_1}{\partial y_1} \right)f_2 + A \left( \dfrac{\partial f_1}{\partial x_3} - \dfrac{F}{G} \dfrac{\partial f_1}{\partial y_1} \right) + 2 B f_2^2 + 2 C f_3^2, \hfill \\
\tilde{b} = & \dfrac{A}{G} \dfrac{\partial f_1}{\partial y_1}, \hfill  \\
\end{aligned}
\end{equation}

\smallskip
where

\begin{equation}
\label{eqA19}
\begin{aligned}
A & = \frac{1}{2} \dfrac{\partial g_1}{\partial x} \dfrac{\partial^2 g_1}{\partial u^2}, \hfill \vspace{6pt} \\
B & = \dfrac{f_3}{2f_2} \left[ \dfrac{\partial^2 g_1}{\partial u^2}\dfrac{\partial^2 g_1}{\partial y \partial z} + \dfrac{\partial^2 g_1}{\partial y \partial u}\dfrac{\partial^2 g_1}{\partial z \partial u} \right] + \dfrac{1}{4} \left[ \dfrac{\partial^2 g_1}{\partial u^2}\dfrac{\partial^2 g_1}{\partial y^2}  - \left( \dfrac{\partial^2 g_1}{\partial y \partial u} \right)^2 \right], \hfill \vspace{6pt} \\
C & = \dfrac{1}{4} \left[\dfrac{\partial^2 g_1}{\partial z^2}\dfrac{\partial^2 g_1}{\partial u^2} - \left(\dfrac{\partial^2 g_1}{\partial z \partial u}\right)^2  \right], \hfill \vspace{6pt} \\
D & = - \frac{1}{f_2}, \hfill \vspace{6pt} \\
E & = - \frac{1}{2} \dfrac{\partial^2 g_1}{\partial y \partial u}, \hfill \vspace{6pt} \\
F & = - \frac{1}{2} \dfrac{\partial^2 g_1}{\partial z \partial u}, \hfill \vspace{6pt} \\
G & = - \frac{1}{2} \dfrac{\partial^2 g_1}{\partial u^2}. \hfill
\end{aligned}
\end{equation}

Finally, we deduce:

\begin{equation}
\label{eqA20}
\begin{aligned}
\tilde{a} & = \frac{1}{2} f_2^2 \left( \dfrac{\partial^2 g_1}{\partial x_2^2 } \dfrac{\partial^2 g_1}{\partial y_1^2} - (\dfrac{\partial^2 g_1}{\partial x_2 \partial y_1})^2 \right) + \frac{1}{2} f_2 \dfrac{\partial g_1}{\partial x_1} \left( \dfrac{\partial^2 g_1}{\partial y_1^2 }\dfrac{\partial f_1}{\partial x_2} - \dfrac{\partial^2 g_1}{\partial x_2 \partial y_1 } \dfrac{\partial f_1}{\partial y_1} \right) \hfill \\
& + \frac{1}{2} f_3^2 \left( \dfrac{\partial^2 g_1}{\partial x_3^2 } \dfrac{\partial^2 g_1}{\partial y_1^2} - (\dfrac{\partial^2 g_1}{\partial x_3 \partial y_1})^2 \right) + \frac{1}{2} f_3 \dfrac{\partial g_1}{\partial x_1} \left( \dfrac{\partial^2 g_1}{\partial y_1^2 }\dfrac{\partial f_1}{\partial x_3} - \dfrac{\partial^2 g_1}{\partial x_3 \partial y_1 } \dfrac{\partial f_1}{\partial y_1} \right) \hfill \\
& + f_2 f_3 \left( \dfrac{\partial^2 g_1}{\partial x_2 \partial x_3 } \dfrac{\partial^2 g_1}{\partial y_1^2} - \dfrac{\partial^2 g_1}{\partial x_2 \partial y_1}\dfrac{\partial^2 g_1}{\partial x_3 \partial y_1} \right), \hfill \\
\tilde{b} & = - \dfrac{\partial g_1}{\partial x_1}\dfrac{\partial f_1}{\partial y_1}, \hfill  \\
\end{aligned}
\end{equation}

\smallskip

This is the result we established in Sec. 4.7. Moreover, let's notice that by posing $f_3=0$ in $\tilde{a}$ we find again $a$ given in Sec. 3.7.

Routh-Hurwitz' theorem and their application to the determination of the Hopf bifurcation parameter-value in the case of three and four-dimensional singularly perturbed system are presented in this appendix.

\subsection*{C. Routh-Hurwitz's theorem for 3D systems}

According to (\ref{eq23}) the Cayley-Hamilton eigenpolynomial associated with the Jacobian of a three-dimensional singularly perturbed system (\ref{eq11}) reads:

\begin{equation}
\label{eqA21}
\lambda^3 - \sigma_1 \lambda^2 + \sigma_2 \lambda - \sigma_3 = 0
\end{equation}

where

\begin{equation}
\label{eqA22}
\begin{aligned}
\sigma_1 & = \lambda_1 + \lambda_2 + \lambda_3,\\
\sigma_2 & = \lambda_1\lambda_2 + \lambda_2\lambda_3 + \lambda_1\lambda_3,\\
\sigma_3 & = \lambda_1\lambda_2\lambda_3.
\end{aligned}
\end{equation}

Let's rewrite the eigenpolynomial (A-21) as: $a_3 \lambda^3 + a_2 \lambda^2 + a_1 \lambda + a_0 = 0$ ($a_0 > 0$). Routh-Hurwitz' theorem \cite{Routh1877,Hurwotz1893} states that the real parts of the eigenvalues of this eigenpolynomial are negative \textit{if and only if} all the following determinants:

\begin{equation}
\label{eqA23}
D_1 = a_1 \quad \mbox{ ; } \quad D_2 = \begin{vmatrix} a_1  &  a_0  \\ a_3  &  a_2 \end{vmatrix} = a_1 a_2 - a_0 a_3
\end{equation}

are positive.\\

Now, let suppose that the eigenpolynomial (A-21) has one real eigenvalue $\lambda_1 \neq 0$ and two complex conjugated $\lambda_{2,3} = a + \imath b$ (with $a\neq 0$ an $b \neq 0$). So, we have:

\begin{equation}
\label{eqA24}
\begin{aligned}
\sigma_1 & = \lambda_1 + 2a,\\
\sigma_2 & = 2a\lambda_1 + a^2 + b^2,\\
\sigma_3 & = \lambda_1\left( a^2 + b^2 \right).
\end{aligned}
\end{equation}

The determinant $D_2$ reads:

\begin{equation}
\label{eqA25}
D_2 = -2 a \left(a^2 + b^2 + 2a \lambda _1 + \lambda _1^2 \right)
\end{equation}

Moreover, if we consider that the real part of the complex conjugated eigenvalues $\lambda_{2,3}$ depends on a parameter, say $\mu$, we have $a = a\left( \mu \right)$. Then, determinant $D_2$ vanishes at the location of the points where the real part $a = a\left( \mu \right)$. So, it can be used to determine the Hopf-parameter value.

\subsection*{D. Routh-Hurwitz's theorem for 4D systems}

According to (\ref{eq43}) the Cayley-Hamilton eigenpolynomial associated with the Jacobian of a four-dimensional singularly perturbed system (\ref{eq30}) reads:

\begin{equation}
\label{eqA26}
\lambda^4 - \sigma_1 \lambda^3 + \sigma_2 \lambda^2 - \sigma_3 \lambda + \sigma_4 = 0
\end{equation}

where

\begin{equation}
\label{eqA27}
\begin{aligned}
\sigma_1 & = \lambda_1 + \lambda_2 + \lambda_3 + \lambda_4,\\
\sigma_2 & = \lambda_1 \lambda _2 + \lambda_1 \lambda_3 + \lambda_2 \lambda_3 + \lambda_1 \lambda_4 + \lambda_2 \lambda_4 + \lambda_3 \lambda_4,\\
\sigma_3 & = \lambda_1 \lambda_2 \lambda_3 + \lambda_1 \lambda_2 \lambda_4 + \lambda_1 \lambda_3 \lambda_4 + \lambda_2 \lambda_3 \lambda_4,\\
\sigma_4 & = \lambda_1\lambda_2\lambda_3\lambda_4.\\
\end{aligned}
\end{equation}

Let's rewrite the eigenpolynomial (A-26) as: $a_4 \lambda^4 + a_3 \lambda^3 + a_2 \lambda^2 + a_1 \lambda + a_0 = 0$ ($a_0 > 0$). Routh-Hurwitz' theorem [1877, 1893] states that the real parts of the eigenvalues of this eigenpolynomial are negative \textit{if and only if} all the following determinants:

\begin{equation}
\label{eqA28}
D_1 = a_1 \mbox{ ; } D_2 = \begin{vmatrix} a_1  &  a_0  \\ a_3  &  a_2  \end{vmatrix} = a_1 a_2 - a_0 a_3 \mbox{ ; } \quad D_3 = \begin{vmatrix} a_1 & a_0  &  0  \\ a_3  &  a_2  &  a_1  \\ 0  &  a_4  &  a_3  \end{vmatrix}
\end{equation}

are positive.\\

Now, let suppose that the eigenpolynomial (A-26) has two real eigenvalues $\lambda_1$, $\lambda_2$ with $\lambda_1 \neq - \lambda_2 \neq 0$ and two complex conjugated $\lambda_{3,4} = a + \imath b$ (with $a\neq 0$ an $b \neq 0$). So, we have:

\begin{equation}
\label{eqA29}
\begin{aligned}
\sigma_1 & = 2 a + \lambda_1 + \lambda_2,\\
\sigma_2 & = a^2 + b^2 + 2a \left( \lambda_1 + \lambda_2 \right) + \lambda_1 \lambda_2,\\
\sigma_3 & =  2 a \lambda_1 \lambda_2 + \left(a^2 + b^2\right) \left( \lambda_1 + \lambda_2 \right),\\
\sigma_4 & = \left(a^2 + b^2 \right) \lambda_1 \lambda_2.
\end{aligned}
\end{equation}

The determinant $D_3$ reads:

\begin{equation}
\label{eqA30}
D_3 = 2 a \left(a^2 + b^2 + 2a \lambda_1 + \lambda_1^2 \right) \left( \lambda_1 + \lambda_2 \right) \left(a^2 + b^2 + 2a\lambda_2 + \lambda_2^2 \right)
\end{equation}

Moreover, if we consider that the real part of the complex conjugated eigenvalues $\lambda_{2,3}$ depends on a parameter, say $\mu$, we have $a = a\left( \mu \right)$. Then, determinant $D_3$ vanishes at the location of the points where the real part $a = a\left( \mu \right)$. So, it can be used to determine the Hopf-parameter value.

\end{article}
\end{document}